\theoremstyle{plain}
\newtheorem{theoreme}{Théorème}[section]
\newtheorem{lemme}{Lemme}[section]
\newtheorem{prop}{Proposition}[section]
\newtheorem{cj}{Conjecture}[section]
\theoremstyle{definition}
\newtheorem*{ack}{Remerciements}
\newcommand{\e}{{\rm e}}
\newcommand{\dd}{{\rm d}}
\newcommand{\cC}{{\mathcal C}}
\newcommand{\cPhi}{{\check{\Phi}}}
\newcommand{\cD}{{\mathcal D}}
\newcommand{\ee}{{\varepsilon}}
\newcommand{\kk}{{\mathbf k}}
\newcommand{\bfN}{{\mathbf N}}
\renewcommand\Re{\operatorname{\mathfrak{Re}}}
\numberwithin{equation}{section}
\title{Sur les solutions friables de l'équation~$a+b=c$}
\author{Sary Drappeau}
\date{\today}
\address{Université Denis Diderot (Paris VII) \\ Institut de Mathématiques de Jussieu \\ UMR 7586 \\ Bâtiment Chevaleret \\ Bureau~$7$C$08$ \\ $75205$ Paris Cedex 13}
\email{drappeau@math.jussieu.fr}
\begin{abstract}
Dans un récent article~\cite{SoundLaga2011}, Lagarias et Soundararajan étudient les solutions friables à l'équation a+b=c. Sous l'hypothèse de Riemann généralisées aux fonctions L de Dirichlet, ils obtiennent une estimation pour le nombre de solutions pondérées par un poids lisse et une minoration pour le nombre de solutions non pondérées. Le but de cet article est de présenter des arguments qui permettent d'une part de préciser les termes d'erreur et d'étendre les domaines de validité de ces estimations afin de faire le lien avec un travail de la Bretèche et Granville~\cite{AGRB2010}, d'autre part d'obtenir le comportement asymptotique exact du nombre de solutions non pondérées. 
\end{abstract}
\begin{document}

\maketitle

\tableofcontents

\section{Introduction}

La conjecture~$abc$ formulée en 1985 par Masser et Oesterlé (voir par exemple~\cite{Oest88}) relie la taille des solutions à l'équation
\begin{equation}\label{abc}
a + b = c
\end{equation}
avec leur \emph{radical}~$R(a, b, c) = \prod_{p | a b c}{p}$ de la façon suivante.
\begin{cj}
Il existe une constante~$\kappa$ telle que pour tout~$\ee > 0$, il n'existe
qu'un nombre fini de solutions~$(a, b, c)$ avec~${\rm pgcd}(|a|, |b|, |c|) = 1$ à l'équation
\eqref{abc} sous la condition
\[ R(|a|, |b|, |c|) \le \max(|a|, |b|, |c|)^{\kappa-\ee}. \]
\end{cj}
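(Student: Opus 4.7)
This conjecture is one of the central open problems in arithmetic, so any honest proposal can only outline directions in which partial or conditional progress exists, not a complete strategy. I would begin by recording the standard reformulations that make the statement amenable to geometric techniques. Setting $N=R(|a|,|b|,|c|)$ and $H=\max(|a|,|b|,|c|)$, the assertion $H\le N^{1/(\kappa-\varepsilon)}$ for all but finitely many coprime triples is equivalent, via classical dictionaries, to Szpiro's conjecture on the minimal discriminant of the Frey elliptic curve $y^2=x(x-a)(x+b)$, to Vojta's height inequality for the thrice-punctured line $(\mathbb{P}^{1},\{0,1,\infty\})$, and to an arithmetic Bogomolov--Miyaoka--Yau estimate. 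Each of these is a candidate target; proving any one of them with the right exponent delivers the conjecture.

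My preferred route would be the Elkies--Belyi reduction. Given a putative family of counterexamples $(a,b,c)$, apply Belyi's theorem to produce coverings $\varphi:X\to\mathbb{P}^{1}$ ramified only over $\{0,1,\infty\}$, and pull the point $a/c$ back to $P\in X(\overline{\mathbb{Q}})$ on a curve of genus $\ge 2$. Faltings' finiteness bounds the \emph{number} of points; the quantitative input needed is an effective form of Vojta's inequality on $X$, producing a height bound of the shape $h(P)\le (1+o(1))\,\log N$ with a constant independent of $X$. I would attempt to follow Vojta's program in arithmetic intersection theory: construct an auxiliary section of large norm on $X\times X$ over a suitable Arakelov model, apply a Dyson--Roth-style lemma to bound its local contributions, and combine with Faltings' arithmetic Riemann--Roch to extract a uniform inequality.

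The principal obstacle, and the reason this conjecture has resisted all attempts, is \emph{effectivity combined with uniformity}. Faltings' theorem, Vojta's inequality, and Bombieri's refinements all produce ineffective or curve-dependent exceptional sets, so one never controls the exceptions uniformly as $(a,b,c)$ varies. A successful argument would need either an effective arithmetic BMY inequality, transported from Miyaoka's geometric proof to $\operatorname{Spec}\mathbb{Z}$ via Arakelov theory, or a genuinely new input such as Mochizuki's anabelian/inter-universal framework (whose verification remains contested). I would therefore expect the bulk of any serious attempt to consist not in deriving an inequality at a single triple but in uniformising the error terms across the infinite family --- the precise step where currently available Diophantine technology falls short, and the reason the present paper restricts attention to the much more tractable \emph{friable} subfamily rather than engaging the full conjecture.
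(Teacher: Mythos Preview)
The statement you were asked to prove is not a theorem of the paper but the $abc$ conjecture itself, recorded there explicitly as a \emph{conjecture} (environment \texttt{cj}) with no proof offered or claimed. The paper uses it only as motivation: it studies the count of $y$-friable solutions to $a+b=c$ under GRH, which is a far weaker and entirely different question. So there is no ``paper's own proof'' to compare your proposal against.

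Your write-up is an accurate and well-informed survey of the known reformulations (Szpiro, Vojta, arithmetic BMY) and of the Elkies--Belyi reduction, and you correctly identify the central obstruction---the lack of effective, uniform constants in Faltings/Vojta-type inequalities. But it is not a proof and does not pretend to be one; you say as much in your first sentence. That honesty is appropriate: the conjecture is open, and nothing in the paper changes that. There is therefore no gap to name beyond the obvious one, which you already name yourself.
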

On peut approcher le problème en comptant les solutions de~\eqref{abc} suivant la taille de leurs facteurs premiers.
C'est l'objet d'un travail de Lagarias et Soundararajan~\cite{SoundLaga2011, SoundLagaXYZ}.
On note respectivement~$P^+(n)$ et~$P^-(n)$ le plus grand et le plus petit facteur premier de~$n$ (avec les conventions~$P^+(1) = 1$ et~$P^-(1) = \infty$), ainsi que
\[ S(x, y) = \{ n \in {\mathbf N} \mid 1 \le n \le x \mbox{ et } P^+(n) \le y \} \]
\[ \Psi(x, y) = \left| S(x, y) \right| \]
\[ u  = u_x := \frac{\log x}{\log y} \]
\[ H(u) := \exp(u / \log^2 (u + 1)) \text{\quad pour~$u \geq 1$.} \]
On note~$\alpha = \alpha_x = \alpha(x, y)$ (le \emph{point-selle}) l'unique solution positive de
\[ \sum_{p \le y}{\frac{\log(p)}{p^{\alpha}-1}} = \log x. \]
Là où~$\alpha$ et~$u$ sont notés sans indice, il est convenu que la valeur du premier paramètre est~$x$. La valeur du second paramètre,
elle, est toujours~$y$. Lorsque~$x$ et~$y$ tendent vers l'infini tout en vérifiant~$(\log x)^2 \leq y \leq x$, on a
\[ 1 - \alpha \sim (\log u)/\log y \].
On étudie le comportement asymptotique du nombre de solutions à l'équation~\eqref{abc} qui sont $y$-friables et de taille inférieure à~$x$
\[ N(x, y) := \sum_{\substack{a, b, c \in S(x, y) \\ a+b=c }}{1} \]
ainsi que du nombre de ces solutions qui sont \emph{primitives}, c'est-à-dire avec à composantes premières entre elles
\[ N^*(x, y) := \sum_{\substack{a, b, c \in S(x, y) \\ (a, b, c) = 1 \\ a+b=c }}{1} .\]
Le comportement asymptotique de~$N(x, y)$ a été étudié par de la Bretèche et Granville dans~\cite{AGRB2010} pour les grandes valeurs de~$y$.
\begin{theoreme}[\cite{AGRB2010}, théorème 1.1]\label{BGthm}
Soit~$\ee>0$. Lorsque~$x$ et~$y$ vérifient
\[ \exp((\log x)^{2/3 + \ee}) \leq y \leq x \]
on a uniformément
\[ N(x, y) = \frac{1}{2} \frac{\Psi(x, y)^3}{x} \left( 1 + O_\ee \left( \frac{\log (u+1)}{\log y} \right) \right) .\]
\end{theoreme}
Dans le cadre de l'étude de la conjecture~$abc$ cependant, il apparaît intéressant d'obtenir des résultats valables
lorsque~$y$ est de l'ordre d'une puissance de~$\log x$. On travaille pour cela avec des sommes modifiées, où la condition~$a, b, c \leq x$ est lissée.
\'{E}tant donnée une fonction test $\Phi : {\mathbf R}^+ \rightarrow {\mathbf C}$ à support compact, on étudie le comportement du nombre de solutions pondérées
\[ N(x, y ; \Phi) := \sum_{\substack{P^+(abc) \leq y \\
								    a+b=c }}{\Phi \left( \frac{a}{x} \right)
								             \Phi \left( \frac{b}{x} \right)
								             \Phi \left( \frac{c}{x} \right) } \]
ainsi que la quantité associée pour les solutions primitives
\[ N^*(x, y ; \Phi) := \sum_{\substack{P^+(abc) \leq y \\
									(a, b, c) = 1 \\
								    a+b=c }}{\Phi \left( \frac{a}{x} \right)
								             \Phi \left( \frac{b}{x} \right)
								             \Phi \left( \frac{c}{x} \right) }. \]
La fonction~$\Phi$ est autant que possible utilisée comme une approximation de~${\mathbf 1}_{]0,1]}$,
la fonction indicatrice de l'intervalle~$]0,1]$.

De même que dans~\cite{SoundLaga2011}, on se place dans la situation où l'hypothèse de Riemann est vraie dans la version généralisée suivante.
\begin{cj}{(Hypothèse de Riemann généralisée)}
Les zéros non triviaux de la fonction~$\zeta$ de Riemann ainsi que ceux de toutes les
fonctions~$L$ de Dirichlet ont pour partie réelle~$1/2$.
\end{cj}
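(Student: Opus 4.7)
The statement in question is the \emph{Generalized Riemann Hypothesis}, which, as the author correctly labels it with the \texttt{cj} environment, is a conjecture rather than a theorem. It is one of the central open problems in analytic number theory and a Clay Millennium Prize Problem; no complete proof is known. The plan can therefore only be to sketch the classical analytic framework in which such a statement is normally attacked, and to explain why the present paper merely \emph{assumes} it.

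The standard starting point would be to establish the analytic continuation of~$\zeta(s)$ and of each Dirichlet $L$-function~$L(s,\chi)$ to the whole complex plane, together with the functional equation relating values at~$s$ and~$1-s$. This produces a completed entire function $\Lambda(s,\chi)$ whose zeros coincide with the nontrivial zeros of~$L(s,\chi)$, are confined to the critical strip~$0 < \Re(s) < 1$, and are symmetric about the line~$\Re(s) = 1/2$ and the real axis. The objective is then to exclude any zero off the critical line.

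The next step would be to improve the classical zero-free regions. The Hadamard--de la Vall�e Poussin argument, based on the nonnegativity of $3 + 4\cos\theta + \cos 2\theta$, yields a region of the form~$\Re(s) \ge 1 - c/\log(|t|+2)$; Vinogradov--Korobov type arguments reach only $\Re(s) \ge 1 - c/(\log |t|)^{2/3}(\log\log |t|)^{1/3}$. Alternative programs include Selberg's mollified moment method, which so far only produces a positive proportion of zeros on the critical line, the density theorems \`a la Bombieri--Vinogradov which substitute for GRH on average, and the Hilbert--P�lya spectral heuristic which seeks a self-adjoint operator whose spectrum recovers the imaginary parts of the zeros.

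The main obstacle is, obviously, \emph{that nobody knows how to close the gap}: every known unconditional technique leaves some portion of the critical strip in which nontrivial zeros could in principle lie, the possible existence of Landau--Siegel exceptional zeros for real characters being the most dramatic illustration of the distance between current knowledge and GRH. For the purposes of the present paper, GRH is therefore introduced as a working hypothesis, exactly as in~\cite{SoundLaga2011}, and the ensuing asymptotic statements for~$N(x,y;\Phi)$ and~$N^*(x,y)$ should be read as conditional estimates under this assumption.
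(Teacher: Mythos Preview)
Your assessment is correct and matches the paper's treatment exactly: the statement is labeled as a \texttt{cj} (conjecture), the paper offers no proof whatsoever, and GRH is used purely as a standing hypothesis in the subsequent theorems (see the phrasing ``Si l'hypoth\`ese de Riemann g\'en\'eralis\'ee est vraie'' in Th\'eor\`emes~\ref{SLthm}, \ref{th1}, \ref{th2} and Propositions~\ref{SoundLaga2011_prop_6_1}, \ref{SoundLaga2011_prop_5_1}, \ref{estim_m}). Your contextual remarks on analytic continuation, zero-free regions, and the Hilbert--P\'olya heuristic are accurate but go well beyond what the paper does, which is simply to state the conjecture and move on.
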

Si~$2/3 < \alpha \leq 1$ et si~$\Phi$ est de classe~$\cC^\infty$ par morceaux, à support compact dans~${\mathbf R}_+$, alors on définit
\[ {\mathfrak S}_{0}(\Phi, \alpha) := \alpha^3 \int_{0}^{\infty} {
        \int_{0}^{\infty} {\Phi(t_1) \Phi(t_2) \Phi(t_1+t_2)
        (t_1 t_2 (t_1 + t_2))^{\alpha - 1}\dd t_1} \dd t_2} \]
\[ {\mathfrak S}_{1}(\alpha) := \prod_{p} { \left( 1 + \frac{p-1}{
        p (p^{3 \alpha - 1} - 1)} \left( \frac{p-p^\alpha}{p-1} \right)^3
        \right) } .\]
On note également~${\mathfrak S}^*_{1}(\alpha, y) := {\mathfrak S}_{1}(\alpha) \zeta(3\alpha-1, y)^{-1}$ et ${\mathfrak S}^*_{1}(\alpha) := {\mathfrak S}_{1}(\alpha) \zeta(3\alpha-1)^{-1}$,
avec la notation, pour tout~$s \in {\mathbf C}$ de partie réelle strictement positive,
\[ \zeta(s, y) := \sum_{P^+(n) \leq y}{n^{-s}} = \prod_{p \leq y}{\left(1-p^{-s}\right)^{-1}} .\]
Remarquons que~${\mathfrak S}_{0}(\Phi, \alpha)$ et~${\mathfrak S}_{1}(\alpha)$ sont des fonctions continues de~$\alpha$, on a donc en particulier
\[ {\mathfrak S}_{0}({\mathbf 1}_{]0,1]}, \alpha){\mathfrak S}_{1}(\alpha) = \frac{1}{2} + O\left(\frac{\log u}{\log y}\right) \]

\begin{theoreme}[\cite{SoundLaga2011}, théorèmes 2.1 et 2.2]\label{SLthm}
Soit~$\ee>0$ et~$\Phi$ une fonction de classe~$\cC^\infty$ à support compact inclus dans~$]0, \infty[$.
Si l'hypothèse de Riemann généralisée est vraie, alors pour tous~$x$ et~$y$ vérifiant
\[ 2 \leq (\log x)^{8+\ee} \leq y \leq \exp((\log x)^{\frac{1}{2}-\ee})\]
on a
\[ N(x, y ; \Phi) = {\mathfrak S}_{0}(\Phi, \alpha) {\mathfrak S}_{1}(\alpha) \frac{\Psi(x, y)^3}{x}
                    \left\{ 1 + O_{\ee, \Phi}\left(\frac{\log \log y}{\log y} \right)  \right\} \]
\[ N^*(x, y ; \Phi) = {\mathfrak S}_{0}(\Phi, \alpha) {\mathfrak S}^*_{1}(\alpha, y) \frac{\Psi(x, y)^3}{x}
					\left\{ 1 + O_{\ee, \Phi}\left(\frac{1}{(\log y)^{1/4}} \right)  \right\} .\]
\end{theoreme}

Les termes principaux des estimations des Théorèmes~\ref{SLthm} et~\ref{BGthm} sont donc compatibles en vertu de la remarque précédente ;
mais leurs intervalles de validités ne se rejoignent pas.
En premier lieu, on présente ici une modification de l'argument de Lagarias et Soundararajan
qui permet d'une part de combler cet intervalle non résolu, d'autre part d'améliorer le terme d'erreur.
Définissons pour tout~$\kappa \geq 1$ et~$c_0>0$ les domaines
\begin{equation}\label{domaine_De}
\cD(\kappa) := \left\{ (x, y) \in {\mathbf R}^2 \mid 2 \le (\log x)^\kappa \le y \le x \right\}
\end{equation}
\begin{equation}\label{domaine_De*}
\cD^*(\kappa, c_0) := \cD(\kappa) \cap \left\{ (x, y) \in {\mathbf R}^2 \mid (\log y) H(u)^{-c_0} \leq 1 \right\}
\end{equation}
Notre résultat est le suivant.

\begin{theoreme}\label{th1}
Il existe une constante absolue~$c_0>0$ telle que pour tous~$\ee > 0$ et~$\Phi$ de classe~$\cC^\infty$ à support compact inclus dans~$]0, \infty[$,
si l'hypothèse de Riemann généralisée est vraie, alors pour tout~$(x, y)$ dans le domaine~$\cD^*(8+\ee, c_0)$ on a
\begin{equation}\label{eqv_pcp}
N(x, y ; \Phi) = {\mathfrak S}_{0}(\Phi, \alpha) {\mathfrak S}_{1}(\alpha)
                 \frac{\Psi(x, y)^3}{x} \left\{ 1 + O_{\ee, \Phi}\left(\frac{1}{u} \right) \right\}
\end{equation}
\begin{equation}\label{eqv_pcp_prim}
N^*(x, y ; \Phi) = {\mathfrak S}_{0}(\Phi, \alpha) {\mathfrak S}^*_{1}(\alpha, y)
                   \frac{\Psi(x, y)^3}{x} \left\{ 1 + O_{\ee, \Phi}\left(\frac{1}{u} \right) \right\} .
\end{equation}
\end{theoreme}

On s'attend, mais ce ne sera pas étudié ici, à ce que la technique de La Bretèche et Granville~\cite{AGRB2010} puisse être utilisée pour obtenir une estimation de~$N(x, y ; \Phi)$ valable sous les hypothèses du théorème~\ref{BGthm}.
Cela impliquerait que les estimations~\eqref{eqv_pcp} et~\eqref{eqv_pcp_prim} soient valables dans~$\cD(8+\ee)$ tout entier.
On note que, sous les hypothèses du Théorème~\ref{th1}, lorsque~$x$ et~$y$ tendent vers l'infini,
\[ 1 - \frac{1}{8 + \ee} + o(1) \leq \alpha < 1 .\]
Les termes d'erreurs du Théorème~\ref{th1} sont de même nature que le terme d'erreur identique obtenu par Hildebrand et Tenenbaum~\cite{TeneHild86}
dans l'estimation de~$\Psi(x, y)$ par la méthode du col. 

L'hypothèse sur~$\Phi$ est contraignante en cela qu'elle ne permet pas de prendre des majorants
de la fonction~${\mathbf 1}_{]0,1]}$ par le haut. On a en revanche comme corollaire direct les minorations asymptotiques suivantes.
\begin{theoreme}[\cite{SoundLaga2011}, corollaire des théorèmes 2.1 et 2.2]
Soit~$\ee>0$. Si l'hypothèse de Riemann est vraie, lorsque~$x$ et~$y$ tendent vers l'infini en vérifiant
\[ (\log x)^{8+\ee} \leq y \leq \exp((\log x)^{\frac{1}{2}-\ee}) \mbox{,\quad on a}\]
\[ N(x, y) \geq {\mathfrak S}_{0}({\mathbf 1}_{]0,1]}, \alpha) {\mathfrak S}_{1}(\alpha) \frac{\Psi(x, y)^3}{x}
                \left\{1 + o_\ee(1) \right\} \]
\[ N^*(x, y) \geq {\mathfrak S}_{0}({\mathbf 1}_{]0,1]}, \alpha) {\mathfrak S}^*_{1}(\alpha) \frac{\Psi(x, y)^3}{x}
                  \left\{1 + o_\ee(1) \right\}. \]
\end{theoreme}
On présente un raisonnement qui permet de parvenir aux égalités asymptotiques.
\begin{theoreme}\label{th2}
Soit~$\ee>0$. Si l'hypothèse de Riemann est vraie, lorsque~$(x, y) \in \cD(8+\ee)$ et~$x$ et~$y$ tendent vers l'infini on a
\begin{equation}\label{eqv_nonpond} N(x, y) = {\mathfrak S}_{0}({\mathbf 1}_{]0,1]}, \alpha) {\mathfrak S}_{1}(\alpha) \frac{\Psi(x, y)^3}{x} \left\{1 + o_\ee(1) \right\} \end{equation}
\begin{equation}\label{eqv_nonpond_prim} N^*(x, y) = {\mathfrak S}_{0}({\mathbf 1}_{]0,1]}, \alpha) {\mathfrak S}^*_{1}(\alpha) \frac{\Psi(x, y)^3}{x} \left\{1 + o_\ee(1) \right\} . \end{equation}
\end{theoreme}

Deux éléments ont permis d'améliorer les résultats de Lagarias et Soundararajan : un changement dans le choix d'un contour,
et l'utilisation de résultats très précis de La Bretèche et Tenenbaum~\cite{TeneRDLB2005Stat} sur le rapport $\Psi(x/d, y)/\Psi(x,y)$.

\begin{ack}
L'auteur souhaite adresser ses meilleurs remerciements à son directeur de thèse, Régis de la Bretèche, pour sa grande disponibilité et ses nombreux conseils durant la rédaction de cet article,
ainsi qu'à Andrew Granville pour sa relecture et ses remarques.
\end{ack}

\section{Rappel de quelques résultats sur les entiers friables}

Dans leur article~\cite{TeneHild86}, Hildebrand et Tenenbaum ont utilisé la méthode du col afin d'étudier~$\Psi(x, y)$, en tirant avantage de l'identité
\begin{equation}\label{expr_psi_integ}
\Psi(x, y) = \frac{1}{2 i \pi} \int_{\sigma-i\infty}^{\sigma+i\infty}{ \zeta(s, y) x^s \frac{\dd s}{s}}
\end{equation}
valable pour tout~$\sigma > 0$. Il apparaît en effet que si l'on choisit~$\sigma = \alpha(x, y)$ alors la contribution principale à l'intégrale entière vient
de la partie du contour autour de~$\alpha$, c'est-à-dire les valeurs de~$s$ ayant une petite partie imaginaire.

On note~$s \mapsto \phi_2(s, y)$ la dérivée seconde de la fonction~$s \mapsto \log\zeta(s, y)$.
Le résultat principal de Hildebrand et Tenenbaum est contenu dans les deux lemmes suivants.

\begin{lemme}[\cite{TeneHild86}, lemme 10]\label{ht_psi_integ}
Soit~$\ee > 0$. Il existe~$c_1>0$ tel que lorsque~$(x, y) \in \cD(1)$ on ait
\[
\Psi(x, y) = \frac{1}{2 i \pi} \int_{\alpha-i/\log y}^{\alpha+i/\log y}{\zeta(s ; y)\frac{x^s}{s} \dd s}
+ O_{\ee}\left(x^\alpha \zeta(\alpha ; y) \left( H(u)^{-c_1} +
                                            \exp\left\{-(\log y)^{3/2-\ee}\right\} \right) \right)
.\]
\end{lemme}
\begin{lemme}[\cite{TeneHild86}, lemme 11]\label{ht_integ_asympt}
Lorsque~$(x, y) \in \cD(1)$ on a
\begin{align*}
\frac{1}{2 i \pi} \int_{\alpha-i/\log y}^{\alpha+i/\log y}{\zeta(s ; y)\frac{x^s}{s} \dd s} & =
\frac{1}{2 i \pi} \int_{\alpha-i/\log y}^{\alpha+i/\log y}{ \left| \zeta(s ; y)\frac{x^s}{s} \right| |\dd s|}
   \left( 1 + O\left( \frac{1}{u} \right) \right) \\
& = \frac{x^\alpha \zeta(\alpha ; y)}{\alpha \sqrt{2 \pi \phi_2(\alpha, y)}} \left(1 + O\left(\frac{1}{u}\right)\right)
.\end{align*}
\end{lemme}
On dispose par ailleurs du résultat élémentaire suivant.
\begin{lemme}\label{lemme_zeta_u}
Lorsque~$x \geq y \geq 2$, on a
\[ \log \zeta(\alpha, y) = u\left\{1 + O\left(\frac{\log \log (u+2)}{\log (u+2)}\right)\right\} .\]
\end{lemme}
Cela permet d'écrire dans le même domaine
\begin{equation}\label{psi_x_alpha}
\Psi(x, y) = x^{\alpha + o(1)}
.\end{equation}

On reprend dans la section suivante une partie de la preuve de Hildebrand et Tenenbaum, on cite donc quelques résultats précis concernant l'estimation de l'intégrale~\eqref{expr_psi_integ}.
On se place sur la droite~$\Re s = \alpha$ et on note~$T_0 = u^{-1/3}/\log y$. Les points de partie imaginaire supérieure à~$T_0$ contribuent de façon négligeable.
Quant aux autres, un développement limité permet d'estimer leur contribution.
Le lemme suivant permet de traiter le cas des points de partie imaginaire~$\tau$ vérifiant $1/\log y \leq |\tau| \leq y$.

\begin{lemme}[\cite{TeneHild86}, lemme 8]\label{ht_majo_zeta}
Lorsque~$1/\log y \leq |\tau| \leq y$, il existe~$c_2>0$ tel que pour tout~$(x, y) \in \cD(1)$ on ait
\[ \zeta(\alpha + i \tau ; y) \ll \zeta(\alpha ; y) \exp \left( -c_2 \frac{u \tau^2}{(1 - \alpha)^2 + \tau^2} \right) .\]
\end{lemme}

Pour les points de partie imaginaire vérifiant $T_0 \leq |\tau| \leq 1/\log y$ on utilise la majoration suivante,
énoncée dans la démonstration du lemme~11 de~\cite{TeneHild86}.
\begin{lemme}[\cite{TeneHild86}, démonstration du lemme~11]\label{ht_majo_tau_mid}
Pour tout~$(x, y) \in \cD(1)$ et~$\sigma = \alpha$, on a
\[ \int_{T_0\leq|\tau|\leq1/\log y}{\left|\zeta(s, y)\frac{x^s}{s}\right|\dd s} \ll \frac{\Psi(x, y)}{u} .\]
\end{lemme}
Enfin, le lemme suivant est une reformulation du lemme~4 de~\cite{TeneHild86}.
\begin{lemme}[\cite{TeneHild86}, lemme~4]\label{ht_zeta_taylor}
On note~$\sigma_k$ ($2\leq k\leq 4$) la valeur de la dérivée~$k$-ième de la fonction~$s \mapsto \log \zeta(s, y)$ en~$s = \alpha$.
Pour~$(x, y) \in \cD(1)$ et~$s = \alpha + i\tau$ avec~$|\tau| \leq T_0$ on a
\[ \log\zeta(s, y) = \log\zeta(\alpha, y) - i\tau\log x - \frac{\tau^2}{2}\sigma_2 -i\frac{\tau^3}{6}\sigma_3 + O(\tau^4 \sigma_4) .\]
De plus les quantités~$\sigma_k$ vérifient $\sigma_k \asymp u (\log y)^k$.
\end{lemme}
On note que~$\sigma_2 = \phi_2(\alpha,y)$.

\bigskip

Concernant le comportement local de~$\Psi(x, y)$, on cite le résultat suivant, dû à La Bretèche et Tenenbaum~\cite{TeneRDLB2005Stat}.

\begin{lemme}[\cite{TeneRDLB2005Stat}, Théorème 2.4]\label{estim_psi_local}
Il existe deux constantes absolues~$b_1$ et~$b_2$  et une fonction~$b = b(x, y; d)$ satisfaisant~$b_1 \leq b \leq b_2$ telles que pour tout~$(x, y) \in \cD(1)$
et~$d \in {\mathbf N}$ avec~$1 \leq d \leq x$ on ait uniformément
\[
\Psi\left( \frac{x}{d}, y\right) = \left\{1 + O\left(\frac{t}{u}\right) \right\} \left(1 - \frac{t^2}{u^2}\right)^{b u} \frac{\Psi(x, y)}{d^\alpha}
\]
où on a posé $t = (\log d)/\log y$.
\end{lemme}
Remarquons que cela implique sous les mêmes hypothèses la majoration
\[ \Psi(x/d, y) \ll  \Psi(x, y)/d^\alpha .\]
On dispose avec ceci de tous les outils nécessaires pour préciser le résultat de Lagarias et Soundararajan.

\section{Solutions générales pondérées}

Dans ce qui suit, $\Phi : {\mathbf R_+} \rightarrow {\mathbf C}$ désigne une fonction de classe~$\mathcal{C}^{\infty}$ à support compact inclus dans~$]0, \infty [$.
L'objet d'étude dans cette partie est la quantité suivante.
\[ N(A, B, C, y ; \Phi) := \sum_{\substack{P^+(abc) \leq y \\
								    a+b=c }}{\Phi \left( \frac{a}{A} \right)
								             \Phi \left( \frac{b}{B} \right)
								             \Phi \left( \frac{c}{C} \right) } .\]
On a donc en particulier $N(x, y ; \Phi) = N(x, x, x, y ; \Phi)$.

\subsection{Rappel des lemmes de Lagarias et Soundararajan~\cite{SoundLaga2011}}

Dans un premier temps, on rappelle les principales étapes du raisonnement de Lagarias et Soundararajan.
Leur méthode se base sur la méthode du cercle.
On note $\e(x) = \exp(2 i \pi x)$ et on pose
\[ E_\Phi(x, y ; \vartheta) := \sum_{P^+(n) \leq y}{\e(n\vartheta) \Phi\left(\frac{n}{x}\right)} .\]
On peut alors écrire
\begin{equation}\label{mc_expr_integrale}
N(A, B, C, y ; \Phi) = \int_0^1{E_\Phi(A, y ; \vartheta) E_\Phi(B, y ; \vartheta) E_\Phi(C, y ; -\vartheta) \dd\vartheta}
.\end{equation}
Le but est de trouver des estimations de~$E_\Phi(x, y ; \vartheta)$ qui se comportent bien lorsqu'on les reporte dans l'intégrale.

Le comportement de~$E_\Phi(x, y ; \vartheta)$ est fortement lié aux approximations rationnelles de~$\vartheta$ : ainsi, si~$\vartheta = a/q + \beta$
on peut écrire (\emph{cf.} la démonstration de la proposition~6.1 de~\cite{SoundLaga2011})
\begin{equation}\label{expr_E_somme}
E_\Phi(x, y ; \vartheta) = \sum_{\substack{d | q \\ P^+(d) \leq y}}{\frac{1}{\varphi(q/d)}
                   \sum_{\chi \text{ (mod } q/d \text{)}}{
                      \chi(a) \tau(\overline{\chi}) \left( \sum_{P^+(m) \leq y}{\e (m d \beta) \chi(m) \Phi \left( \frac{m d}{x} \right)} \right)}}
\end{equation}
où~$\tau(\overline{\chi})$ est la somme de Gauss
$\sum_{b \text{ (mod } q/d \text{)}}{\overline{\chi}(b) \e(\frac{b}{q/d})}$.
La proposition 6.1 de~\cite{SoundLaga2011} montre que la contribution des caractères non principaux à~$E_\Phi(x, y ; \vartheta)$ est négligeable.
La version qu'on énonce ici se place sous des hypothèses plus générales, mais se montre de manière identique :
c'est pourquoi nous n'en donnons pas la preuve.
\begin{prop}[\cite{SoundLaga2011}, proposition~6.1]\label{SoundLaga2011_prop_6_1}
Soit~$\ee>0$ et~$\Phi$ une fonction de classe~$\mathcal{C}^{\infty}$ à support compact inclus dans~$]0, \infty [$.
Si l'hypothèse de Riemann généralisée est vraie, alors lorsque les réels~$x$, $y$, $R$ et~$\vartheta$ vérifient $2 \leq y \leq x \leq R$
et~$\vartheta \in [0, 1]$ avec~$\vartheta = a/q + \beta$ où $(a, q) = 1$, $q \leq R^{1/2}$  et~$|\beta| \leq 1/(q R^{1/2})$, on~a
\[ E_\Phi(x, y ; \vartheta) = M_\Phi(x, y ; q, \beta) + O_{\ee, \Phi}(x R^{-1/4 + \ee}) \]
où~$M_\Phi(x, y ; q, \beta)$ est défini par
\[ M_\Phi(x, y ; q, \beta) := \sum_{P^+(n) \leq y}{\frac{\mu(q / (q, n))}{\varphi(q / (q, n))} \e(n \beta) \Phi\left(\frac{n}{x}\right)} .\]
\end{prop}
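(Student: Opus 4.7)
Le plan est d'imiter l'argument original de la proposition~6.1 de~\cite{SoundLaga2011} ; les hypothèses diffèrent ici seulement en ce qu'aucune borne supérieure sur~$y$ en fonction de~$x$ n'est imposée, ce qui ne joue aucun rôle dans les étapes ci-dessous.

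À partir de~\eqref{expr_E_somme}, on sépare la somme sur les caractères modulo~$q/d$ en la contribution du caractère principal et celle des caractères non principaux. Pour le caractère principal~$\chi_0$, on a $\chi_0(a)=1$, $\tau(\overline{\chi_0})=\mu(q/d)$, et la somme intérieure se réduit à $\sum_{P^+(m)\leq y,\,(m,q/d)=1}\e(md\beta)\Phi(md/x)$. Posant $n=md$, la condition de coprimalité devient $(n/d,q/d)=1$, ce qui force $d=(q,n)$ ; la somme sur $d\mid q$ s'effondre donc en un unique terme et la contribution des caractères principaux vaut exactement
\[
\sum_{P^+(n)\leq y}\frac{\mu(q/(q,n))}{\varphi(q/(q,n))}\e(n\beta)\Phi(n/x)=M_\Phi(x,y;q,\beta).
\]

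Pour la contribution des caractères non principaux, on utiliserait l'inversion de Mellin : puisque $\Phi\in\cC^\infty$ est à support compact dans~$]0,\infty[$, sa transformée de Mellin $\widetilde\Phi$ est holomorphe dans toute bande verticale et vérifie $|\widetilde\Phi(\sigma+i\tau)|\ll_{\Phi,N,\sigma}(1+|\tau|)^{-N}$ pour tout entier $N\geq 1$. Pour tout caractère non principal $\chi\pmod{q/d}$, cela donne
\[
\sum_{P^+(m)\leq y}\chi(m)\e(md\beta)\Phi(md/x) = \frac{1}{2i\pi}\int_{(\sigma)}\widetilde\Phi(s)(x/d)^s F_y(s,\chi;d\beta)\,\dd s,
\]
avec $F_y(s,\chi;\gamma):=\sum_{P^+(m)\leq y}\chi(m)\e(m\gamma)m^{-s}$. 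Comme $|d\beta|\leq R^{-1/2}$, la phase $\e(md\beta)$ peut être isolée par sommation d'Abel, ce qui ramène le problème à l'estimation de $\sum_{P^+(m)\leq y}\chi(m)m^{-s}$, reliée à $L(s,\chi)$ via le produit eulérien tronqué au-dessus de~$y$ ; sur la droite $\Re s = 1/2+\ee$ les facteurs pour les premiers $>y$ contribuent de manière absolument bornée, d'où l'on peut invoquer la borne de convexité sous l'hypothèse de Riemann généralisée $L(1/2+\ee+it,\chi^\ast)\ll_\ee(q(1+|t|))^\ee$ pour le caractère primitif $\chi^\ast$ induisant~$\chi$.

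En déplaçant le contour vers $\Re s=1/2+\ee$ et en utilisant la décroissance rapide de $\widetilde\Phi$, chaque contribution non principale est majorée par $(x/d)^{1/2+\ee}$ à des puissances de $q$ près. En pondérant par $|\tau(\overline\chi)|\leq\sqrt{q/d}$, en sommant sur les $\varphi(q/d)-1$ caractères non principaux, puis sur les diviseurs $d\mid q$, et en utilisant $q\leq R^{1/2}$ et $x\leq R$, on obtient un terme d'erreur total d'ordre $O_{\ee,\Phi}(xR^{-1/4+\ee})$. La difficulté principale réside dans le contrôle uniforme, sous RH généralisée, de la somme partielle friable attachée à un caractère non principal lorsque $y$ peut être aussi petit qu'une puissance de $\log x$ : une fois la phase oscillatoire neutralisée par sommation d'Abel et la queue du produit eulérien absorbée dans un facteur borné, il ne reste qu'une application directe de RH généralisée, exactement comme dans l'argument original de~\cite{SoundLaga2011}.
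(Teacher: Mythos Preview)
Your overall architecture matches what the paper indicates (the paper itself does not give a proof, deferring to \cite{SoundLaga2011}, but it spells out the ingredients right after the statement): split \eqref{expr_E_somme} into principal and non-principal characters, identify the principal part with $M_\Phi$, and bound the non-principal part via a Mellin-type representation on $\Re s=1/2+\ee$ using GRH. Your extraction of $M_\Phi$ from the principal characters is correct.

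There is, however, a genuine gap in the non-principal part. You claim that $\sum_{P^+(m)\leq y}\chi(m)m^{-s}$ is related to $L(s,\chi)$ via the Euler product truncated above $y$, and that on $\Re s=1/2+\ee$ ``les facteurs pour les premiers $>y$ contribuent de mani\`ere absolument born\'ee''. This is false: the product $\prod_{p>y}(1-\chi(p)p^{-s})$ is not absolutely convergent for $\Re s<1$, since $\sum_{p>y}p^{-1/2-\ee}$ diverges. Bounding $L(s,\chi;y)=\prod_{p\leq y}(1-\chi(p)p^{-s})^{-1}$ on $\Re s=1/2+\ee$ is precisely the non-trivial content of Proposition~\ref{SoundLaga2011_prop_5_1}; its proof uses GRH to control $\sum_{p\leq y}\chi(p)p^{-s}$ directly (only the terms $k\geq 2$ in $\log L(s,\chi;y)$ are absolutely bounded). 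The Lindel\"of-type bound on the completed $L(s,\chi^\ast)$ that you invoke is not, by itself, enough.

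A second, smaller point: the paper (and \cite{SoundLaga2011}) does not isolate the phase $\e(md\beta)$ by Abel summation. It absorbs it into the kernel $\cPhi(\gamma z,s)=\int_0^\infty\Phi(t)\e(\gamma z t)t^{s-1}\dd t$, writing
\[
\Psi_0(z,y;\chi,\gamma)=\frac{1}{2i\pi}\int_{(\sigma)}L(s,\chi;y)\,z^s\,\cPhi(\gamma z,s)\,\dd s,
\]
and then combines Proposition~\ref{SoundLaga2011_prop_5_1} with the integral bounds on $\cPhi$ from Proposition~\ref{SoundLaga2011_lemmas_33_35}. This produces the dependence $(1+|\beta|x)^{1/2+\ee}$ needed to land on $O_{\ee,\Phi}(xR^{-1/4+\ee})$. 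Your Abel-summation route, as written, does not explain how to recover this uniformly in $|d\beta|\leq R^{-1/2}$; once you remove the phase you would need uniform bounds on the partial sums $\sum_{m\leq N,\,P^+(m)\leq y}\chi(m)m^{-s}$, which is again the content of Proposition~\ref{SoundLaga2011_prop_5_1} rather than a consequence of the bound on $L(s,\chi^\ast)$ alone.
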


Cette proposition ne donne pas une bonne majoration pour des~$y$ trop petits :
par exemple lorsque~$y < (\log x)^{4-\ee}$, la majoration triviale
\[ E_\Phi(x, y ; \vartheta) \ll_\Phi \Psi(x, y) = x^{\alpha + o(1)} \ll x^{3/4 - \ee/2} \]
est plus forte.

Le point important dans la démonstration de cette proposition est d'avoir une majoration efficace de la somme d'exponentielles
\[ \Psi_0(z, y ; \chi, \gamma) := \sum_{P^+(n) \leq y}{\e (n \gamma) \chi(n) \Phi \left( \frac{n}{z} \right)} \]
lorsque~$\chi$ est un caractère de Dirichlet et~$\gamma \in {\mathbf R}$.
On a pour tout~$\sigma>0$,
\[ \Psi_0(z, y ; \chi, \gamma) = \frac{1}{2 i \pi} \int_{\sigma-i\infty}^{\sigma+i\infty}{L(s, \chi ; y) z^s \cPhi(\gamma z, s) \dd s} \]
où l'on a noté
\[ L(s, \chi ; y) := \sum_{P^+(n) \leq y}{\chi(n) n^{-s}} = \prod_{p\leq y}{(1- \chi(p) p^{-s})^{-1}} \]
\[ \cPhi(\lambda, s) := \int_0^\infty{\Phi(t)\e(\lambda t)t^{s-1}\dd t} .\]
Afin de majorer~$\Psi_0$, il nous suffit d'avoir de bonnes majorations de~$L(s, \chi ; y)$ et~$\cPhi(\lambda,s)$ puis de les reporter dans l'intégrale.
Les majorations dont on dispose sont les suivantes.

\begin{prop}[\cite{SoundLaga2011}, proposition~5.1]\label{SoundLaga2011_prop_5_1}
Soit~$\ee>0$. Si l'hypothèse de Riemann généralisée est vraie, alors pour tout caractère~$\chi$ modulo~$q$
et tout~$s \in {\mathbf C}$ avec $s=\sigma+i\tau$ et~$1/2+\ee \leq \sigma \leq 3/2$, là où l'une quelconque des deux conditions suivantes est vérifiée :
\begin{itemize}
\item $\chi$ est non principal,
\item $\chi$ est principal et~$\tau > y^{1-\sigma}$,
\end{itemize}
on a
\[L(s, \chi ; y) \ll_{\ee} (q |\tau|)^\ee.\]
\end{prop}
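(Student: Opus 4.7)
Le plan est de comparer $L(s, \chi; y)$ à $L(s, \chi)$ via le produit eulérien, puis de majorer le facteur correctif. On écrit
$$\log L(s, \chi; y) = \log L(s, \chi) - \sum_{k \geq 1}\sum_{p > y} \frac{\chi(p)^k}{k p^{ks}}.$$
Sous HRG, la majoration de type Littlewood fournit $\log |L(s, \chi)| \ll_\ee (\log q(|\tau|+2))^{2-2\sigma + \ee}$ pour $\sigma \geq 1/2 + \ee$, d'où directement $|L(s, \chi)| \ll_\ee (q(|\tau|+2))^\ee$. Les termes $k \geq 2$ de la somme sur $p > y$ sont triviaux : $\sum_{k \geq 2} \sum_{p > y} p^{-k\sigma}/k \ll y^{1-2\sigma} \ll_\ee 1$. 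Il suffit donc d'estimer $T(s) := \sum_{p > y} \chi(p) p^{-s}$.

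Cette dernière étape constitue l'obstacle principal. La sommation d'Abel, alliée à la borne HRG $\theta(u, \chi) \ll u^{1/2}(\log qu)^2$, fait apparaître un facteur $|\tau|$ incompatible avec la majoration visée. On procède donc par intégrale de contour : en introduisant une fonction test lisse $\phi$ approchant l'indicatrice de $[1, \infty[$, la formule de Mellin donne
$$\sum_p \chi(p) p^{-s} \phi(p/y) = \frac{1}{2 i \pi} \int_{(c)} P(s+w, \chi) \widetilde{\phi}(w) y^w \, \dd w,$$
où $P(s, \chi) := \sum_p \chi(p) p^{-s}$ vérifie $P(s, \chi) = \log L(s, \chi) + O(1)$ pour $\Re s > 1/2 + \ee$. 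On déplace le contour en $\Re w = 1/2 - \sigma + \ee$. Pour $\chi$ non principal, $\log L(s+w, \chi)$ est holomorphe dans cette bande sous HRG et borné par $O_\ee((\log q(|\tau + \Im w|+2))^{1+\ee/2})$ ; la décroissance rapide de $\widetilde{\phi}$ livre alors $T(s) \ll_\ee (q(|\tau|+2))^\ee$.

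Pour le caractère principal $\chi_0$, $L(s, \chi_0)$ présente un pôle en $s = 1$, donc $\log L(s+w, \chi_0)$ possède une singularité logarithmique en $w = 1-s$. Le déplacement de contour traverse cette singularité, contribuant un terme de la forme $y^{1-s}/((1-s)\log y)$, borné en module par $y^{1-\sigma}/(|\tau|\log y)$. L'hypothèse supplémentaire $|\tau| > y^{1-\sigma}$ est exactement ce qui rend cette contribution $O(1)$ ; sans elle, ce terme résiduel pourrait dominer et la majoration échouerait.

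En combinant ces estimations, on obtient $|L(s, \chi; y)| \ll_\ee (q(|\tau|+2))^\ee$, soit la majoration annoncée. Le c\oe{}ur de la preuve est le déplacement de contour pour $T(s)$, qui exploite de manière cruciale l'absence, sous HRG, de zéros de $L(s, \chi)$ hors de la ligne critique : c'est cette propriété qui rend le déplacement valide et fournit la borne logarithmique sur $\log L(s+w, \chi)$ nécessaire pour clore l'estimation.
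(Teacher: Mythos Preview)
The paper does not prove this proposition; it is quoted from Lagarias--Soundararajan and used as a black box. Your sketch follows the strategy of the original proof there: compare $\log L(s,\chi;y)$ to $\log L(s,\chi)$, invoke the GRH Lindel\"of-type bound on the latter, and control the discrepancy over primes by a contour shift exploiting the zero-free half-plane $\Re s>1/2$. Your identification of the role of the hypothesis $\tau>y^{1-\sigma}$ in the principal case --- to absorb the contribution coming from the pole of $\zeta$ at $s=1$ --- is also the right one.

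Two technical points deserve attention. First, for the principal character and $\sigma\le 1$ the series $\sum_{p>y}\chi_0(p)p^{-s}$ diverges, so your initial decomposition is only formal there; it is cleaner to bound $\sum_{p\le y}\chi(p)p^{-s}$ directly (this equals $\log L(s,\chi;y)+O_\ee(1)$), via a truncated Perron formula and a contour shift, picking up $\log L(s,\chi)+O(1)$ as the residue at $w=0$. Second, replacing the sharp cutoff at $y$ by a smooth $\phi$ leaves an uncontrolled boundary sum over primes in a short range near $y$; a trivial count gives this sum size $\asymp y^{1-\sigma}/\log y$, which is not $O(1)$ for $\sigma$ near $1/2+\ee$. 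Working with the sharp cutoff throughout (Perron with an explicit truncation) avoids this issue, and this is what is done in the original argument. These are routine adjustments and do not affect the substance of your approach.
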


\begin{prop}[\cite{SoundLaga2011}, lemmes~3.3 et~3.5]\label{SoundLaga2011_lemmas_33_35}
Soit~$k\geq 0$ et~$\Phi$ une fonction de classe~$\mathcal{C}^{\infty}$ à support compact inclus dans~$]0, \infty [$.
Pour tous~$\lambda \in {\mathbf R}$ et~$s \in {\mathbf C}$ que l'on écrit~$\sigma+i\tau$ avec~$\sigma \geq 1/4$ on a
\[ |\cPhi(\lambda, s)| \ll_{k, \Phi} \min\left( \left( \frac{1+|\lambda|}{|s|} \right)^k, \left( \frac{1+|s|}{|\lambda|} \right)^k \right) .\]
De plus, soit~$\ee>0$ et~$\delta \geq 0$. Lorsque~$\lambda \in {\mathbf R}$, on a
\[ \int_{-\infty}^{\infty}{|\cPhi(\lambda, s)|(1+|\tau|)^\delta \dd \tau} \ll_{\Phi, \sigma, \ee} (1+|\lambda|)^{1/2+\delta+\ee} .\]
\end{prop}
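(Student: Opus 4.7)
Le plan est de combiner deux outils classiques : l'int\'egration par parties r\'ep\'et\'ee pour les majorations ponctuelles, et la formule de Plancherel (apr\`es changement de variable $t = \e^u$) pour la majoration int\'egrale. Je note $[a, b] \subset\,]0, \infty[$ un intervalle compact contenant le support de $\Phi$.

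Pour la premi\`ere majoration, je proc\'ederais par $k$ int\'egrations par parties en d\'erivant le facteur $\Phi(t)\e(\lambda t)$ et en int\'egrant $t^{s-1}$ ; les termes de bord s'annulent puisque $\Phi$ est \`a support compact dans $]0, \infty[$, ce qui fournit
\[ \cPhi(\lambda, s) = \frac{(-1)^k}{s(s+1)\cdots(s+k-1)} \int_a^b \frac{\dd^k}{\dd t^k}\bigl[\Phi(t) \e(\lambda t)\bigr] t^{s+k-1} \dd t. \]
La formule de Leibniz montre que l'int\'egrand est $\ll_{k, \Phi, \sigma} (1+|\lambda|)^k$ uniform\'ement sur $[a, b]$, et comme $\sigma \geq 1/4$ on a $\prod_{j=0}^{k-1}|s+j| \gg_k |s|^k$ : on en d\'eduit la majoration voulue. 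La seconde majoration se prouve par un $k$-uple int\'egration par parties en sens inverse : on int\`egre $\e(\lambda t) = (2\pi i\lambda)^{-1}\frac{\dd}{\dd t}\e(\lambda t)$ et on d\'erive $\Phi(t) t^{s-1}$, ce qui produit des facteurs major\'es par $\ll_k (1+|s|)^k$. Le cas $|\lambda| < 1$ est couvert par la majoration triviale $|\cPhi(\lambda, s)| \ll_{\Phi, \sigma} 1$ obtenue en majorant $|t^{s-1}| = t^{\sigma-1}$ sur $[a, b]$.

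Pour la majoration int\'egrale, le point de d\'epart est que le changement de variable $t = \e^u$ donne
\[ \cPhi(\lambda, \sigma + i\tau) = \int_{-\infty}^{\infty} g_\lambda(u) \e^{i\tau u}\, \dd u, \qquad g_\lambda(u) := \Phi(\e^u) \e(\lambda\e^u) \e^{\sigma u}, \]
si bien que $\tau \mapsto \cPhi(\lambda, \sigma+i\tau)$ est \`a normalisation pr\`es la transform\'ee de Fourier de $g_\lambda$. Comme $|\e(\lambda\e^u)| = 1$, la formule de Plancherel entra\^ine $\int|\cPhi(\lambda,\sigma+i\tau)|^2\,\dd\tau \ll_{\Phi, \sigma} 1$ uniform\'ement en $\lambda$. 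Je d\'ecouperais alors l'int\'egrale selon $|\tau| \leq T$ et $|\tau| > T$ avec $T := (1+|\lambda|)^{1+1/(2k-1)}$ : dans la premi\`ere zone, l'in\'egalit\'e de Cauchy--Schwarz combin\'ee avec Plancherel fournit la majoration $\ll_{\Phi, \sigma} T^{\delta+1/2}$ ; dans la seconde zone, la premi\`ere majoration ponctuelle donne $\ll_{k, \Phi, \sigma} (1+|\lambda|)^k T^{\delta+1-k}$. Ces deux contributions sont du m\^eme ordre pour le choix ci-dessus de $T$, et en prenant $k$ assez grand en fonction de $\ee$ et $\delta$ on obtient la majoration $\ll (1+|\lambda|)^{1/2+\delta+\ee}$.

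L'obstacle principal r\'eside dans ce dernier calibrage : un choix $T \asymp 1+|\lambda|$ produirait seulement un exposant $1+\delta$, alors que l'exposant optimal $1/2+\delta$ (issu de Plancherel) est obtenu \`a la limite $k \to \infty$. C'est ce qui impose l'apparition du $\ee > 0$ arbitrairement petit dans l'exposant final.
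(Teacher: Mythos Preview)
The paper does not give its own proof of this proposition; it is quoted from \cite{SoundLaga2011} (Lemmas~3.3 and~3.5), so there is no in-paper argument to compare against. Your argument is correct and is the standard one (and indeed essentially that of \cite{SoundLaga2011}): repeated integration by parts in the two directions yields the pointwise bounds, noting that $|s+j|\geq|s|$ for $j\geq0$ since $\sigma>0$; then the substitution $t=\e^u$ turns $\tau\mapsto\cPhi(\lambda,\sigma+i\tau)$ into a Fourier transform, so that Plancherel gives a uniform $L^2$ bound $\ll_{\Phi,\sigma}1$. The splitting at $T=(1+|\lambda|)^{2k/(2k-1)}$ correctly balances the Cauchy--Schwarz contribution $\ll T^{\delta+1/2}$ against the tail $\ll(1+|\lambda|)^k T^{\delta+1-k}$, yielding the exponent $(\delta+\tfrac12)\bigl(1+\tfrac{1}{2k-1}\bigr)\leq\tfrac12+\delta+\ee$ for $k$ large enough in terms of $\delta$ and $\ee$.
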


Il reste alors à traiter le terme principal~$M_\Phi(x, y ; q, \beta)$. On se ramene au cas~$P^+(q) \leq y$ en écrivant
$q = q_0 q_1$ avec~$P^+(q_0) \leq y$ et~$P^-(q_1) > y$. Il vient
\[ M_\Phi(x, y ; q, \beta) = \frac{\mu(q_1)}{\varphi(q_1)} M_\Phi(x, y ; q_0, \beta) .\]
Définissons
\[ \widetilde{M}_\Phi(x, y ; q_0, \beta) := \alpha q_0^{-\alpha} \prod_{p | q_0}{ \left( 1-\frac{p^\alpha-1}{p-1} \right) } \cPhi(\beta x, \alpha) \Psi(x, y) .\]
L'estimation de~$M_\Phi(x, y ; q_0, \beta)$ est l'objet de la proposition suivante de Lagarias et Soundararajan.
De même que la Proposition~\ref{SoundLaga2011_prop_6_1}, nous l'énonçons sous des hypothèses plus générales, mais la démonstration reste identique.

\begin{prop}[\cite{SoundLaga2011}, proposition~6.3]\label{SoundLaga2011_prop_6_3}
Soient~$\delta > 0$, $\ee>0$ et~$\Phi$ une fonction de classe~$\mathcal{C}^{\infty}$ à support compact inclus dans~$]0, \infty [$.
Si l'hypothèse de Riemann généralisée est vraie, alors lorsque les réels~$x, y, R, q_0$ et~$\beta$ vérifient
\[ 2 \leq (\log x)^{2+\ee} \leq y \leq x \leq R \mbox{, } q_0 \in S(R^{1/2}, y) \mbox{ et } \beta \in [-1/(q_0 R^{1/2}), 1/(q_0 R^{1/2})] \]
on a
\begin{itemize}

\item si~$x=R$ et~$|\beta| \geq R^{\delta-1}$ alors
\[ |M_\Phi(x, y ; q_0, \beta)| \ll_{\ee, \delta, \Phi} x^{3/4+\ee} q_0^{-1}, \]

\item si~$|\beta| \leq R^{\delta-1}$ et si de plus $y \leq \exp((\log x)^{1/2-\ee})$,
\[
M_\Phi(x, y ; q_0, \beta) = \widetilde{M}_\Phi(x, y ; q_0, \beta) + O_{\ee, \delta, \Phi} (x^{1/2+\ee} q_0^{-1} R^{1/4+\ee}) +  O_{\ee, \Phi}\left(\frac{q_0^{-\alpha+\ee} \Psi(x,y) }{(1+|\beta|x)^2 (\log y)} \right)
.\]

\end{itemize}
\end{prop}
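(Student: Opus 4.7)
The plan is to follow the standard Mellin--saddle-point strategy used by Hildebrand and Tenenbaum for~$\Psi(x,y)$, adapted to the weighted sum~$M_\Phi$. The first step is the Mellin inversion formula
\[ \Phi(n/x)\,\e(n\beta) = \frac{1}{2\pi i}\int_{\sigma - i\infty}^{\sigma + i\infty} x^s\, n^{-s}\, \cPhi(\beta x, s)\,\dd s \qquad (\sigma > 0), \]
after which, inverting the order of summation and integration, one gets
\[ M_\Phi(x, y; q_0, \beta) = \frac{1}{2\pi i}\int_{(\sigma)} x^s\, \cPhi(\beta x, s)\, Z(s; q_0, y)\,\dd s, \qquad Z(s; q_0, y) := \sum_{P^+(n) \leq y} \frac{\mu(q_0/(q_0,n))}{\varphi(q_0/(q_0,n))} n^{-s}. \]
An Euler product computation (the series vanishing unless $q_0$ is squarefree) yields the key factorisation
\[ Z(s; q_0, y) = \zeta(s, y)\prod_{p \mid q_0}\frac{p^{1-s}-1}{p-1}, \]
whose local factor at~$q_0$ has modulus $O(q_0^{-\Re s + \ee})$ in the relevant strip.

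For the first assertion, where $x = R$ and $|\beta| \geq R^{\delta-1}$, I would take $\sigma = 1/2 + \ee$ in the integral. Proposition~\ref{SoundLaga2011_prop_5_1} applied to the trivial character gives $|\zeta(s, y)| \ll |\tau|^\ee$ in the range $|\tau| > y^{1/2-\ee}$, while for $|\tau| \leq y^{1/2-\ee}$ the rapid decay $|\cPhi(\beta x, s)| \ll_k ((1+|s|)/|\beta x|)^k$ from Proposition~\ref{SoundLaga2011_lemmas_33_35} (with large~$k$ and $|\beta x| \geq R^\delta$) makes the contribution of the short range negligible. Integration against the $L^1$ bound of the same proposition, together with the estimate $O(q_0^{-1/2+\ee})$ on the local factor, yields a total of $O(x^{1/2+\ee} q_0^{-1/2+\ee})$; using the hypothesis $q_0 \leq R^{1/2} = x^{1/2}$ one rewrites $q_0^{-1/2+\ee} \leq q_0^{-1} R^{1/4+\ee/2}$ and recovers the announced bound $x^{3/4+\ee'} q_0^{-1}$.

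For the second assertion I would keep $\sigma = \alpha$ and partition the contour into the four standard zones: $|\tau| \leq T_0 := u^{-1/3}/\log y$; $T_0 \leq |\tau| \leq 1/\log y$; $1/\log y \leq |\tau| \leq y^{1-\alpha}$; and $|\tau| \geq y^{1-\alpha}$. On the central zone, Taylor expansion of $\log \zeta(s, y)$ from Lemma~\ref{ht_zeta_taylor}, together with first-order expansions of $\cPhi(\beta x, s)$ and of the $q_0$-factor around $s = \alpha$, completes the Gaussian integral; comparing with Lemma~\ref{ht_integ_asympt} and invoking the algebraic identity
\[ q_0^{-\alpha}\prod_{p\mid q_0}\Bigl(1 - \frac{p^\alpha-1}{p-1}\Bigr) = \prod_{p\mid q_0}\frac{p^{1-\alpha}-1}{p-1} \]
identifies the main term as $\widetilde{M}_\Phi(x, y; q_0, \beta)$. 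The medium zone is controlled by Lemma~\ref{ht_majo_tau_mid}, which already supplies the gain of~$1/u$; the outer zone is negligible thanks to the exponential decay of Lemma~\ref{ht_majo_zeta}. On the tail $|\tau| \geq y^{1-\alpha}$, I would shift the contour locally to $\Re s = 1/2 + \ee$, apply the GRH-conditional bound on $\zeta(s, y)$ from Proposition~\ref{SoundLaga2011_prop_5_1}, and integrate against Proposition~\ref{SoundLaga2011_lemmas_33_35}; the shifted segment has effective width at most $R^{1/2}$, which produces the error $O(x^{1/2+\ee} q_0^{-1} R^{1/4+\ee})$.

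The main obstacle is organising the bounds on the medium and outer zones so that the resulting errors factorise simultaneously through $(1+|\beta|x)^{-2}$ and $q_0^{-\alpha+\ee}$, as demanded by the stated error term. This forces invoking Proposition~\ref{SoundLaga2011_lemmas_33_35} with the specific value $k=2$, which must be balanced against the need to retain enough decay in~$|\tau|$ to absorb the zeta factor. The hypothesis $y \leq \exp((\log x)^{1/2-\ee})$ is used here to ensure that the four zones separate cleanly and that $y^{1-\alpha}$ is only polynomially large in~$\log x$, so that the GRH-based bound is effective on the tail.
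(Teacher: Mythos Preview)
The paper does not give its own proof of this proposition: it is quoted from \cite{SoundLaga2011} with the remark that the demonstration there carries over unchanged to the slightly more general hypotheses stated. What the paper \emph{does} prove is the sharper Proposition~\ref{estim_m}, and the argument there is precisely the Mellin--saddle-point scheme you outline, so your plan is on the right track and can usefully be compared against that proof.

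Two corrections are in order. First, the parenthetical claim that $Z(s;q_0,y)$ vanishes unless $q_0$ is squarefree is false; for $p^a\|q_0$ the local Euler factor works out to $p^{-(a-1)s}(p^{1-s}-1)/(p-1)$, which is nonzero. The paper records the correct factorisation $Z(s;q_0,y)=q_0^{-s}\prod_{p\mid q_0}\bigl(1-(p^s-1)/(p-1)\bigr)\zeta(s,y)$, valid for all $y$-friable $q_0$. This is harmless for the size estimates (the extra factor only helps), but the algebraic identity you invoke to recognise~$\widetilde M_\Phi$ must be stated for general~$q_0$.

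Second, and this is a genuine gap, your contour for the first assertion fails when $y$ is large. On the line $\sigma=1/2+\ee$ the GRH bound of Proposition~\ref{SoundLaga2011_prop_5_1} applies only for $|\tau|>y^{1/2-\ee}$; for smaller $|\tau|$ you are left with the trivial bound $|\zeta(s,y)|\le\zeta(1/2+\ee,y)\asymp\exp\bigl(c\,y^{1/2-\ee}/\log y\bigr)$, which for $y$ close to~$x$ grows faster than any power of~$x$ and cannot be absorbed by the polynomial decay $x^{-\delta k}$ of~$\cPhi$. (Note that the first assertion carries no upper restriction on~$y$ beyond $y\le x$.) The remedy, visible in the paper's proof of Proposition~\ref{estim_m}, is a hybrid contour: keep $\sigma$ at~$\alpha$ for moderate~$|\tau|$, where $x^\alpha\zeta(\alpha,y)\ll x^{1+o(1)}$ is under control and the $\cPhi$-decay from $|\beta x|\ge x^\delta$ suffices, and push~$\sigma$ toward $1/2+\ee$ only once $|\tau|$ exceeds $y^{1-\sigma}$, so that the GRH bound is available along the entire outer portion.
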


Le reste du raisonnement consiste alors à reporter ces estimations dans l'intégrale~\eqref{mc_expr_integrale}.

On s'intéresse au deuxième cas de la Proposition~\ref{SoundLaga2011_prop_6_3},
car c'est lui qui dicte le terme d'erreur et la limite supérieure en~$y$ : $(\log y) H(u)^{-c_0} \leq 1$ du domaine de validité du Théorème~\ref{th1}.

\subsection{Estimation de \texorpdfstring{$M_\Phi(x, y ; q_0, \beta)$}{M}}

On démontre ici la proposition suivante, qui est une version plus précise de la proposition~6.3 de~\cite{SoundLaga2011}.
\begin{prop}\label{estim_m}
Il existe une constante~$c_0>0$ telle que si l'hypothèse de Riemann généralisée est vraie, pour tout~$\delta>0$ et toute fonction~$\Phi$ de classe~$\mathcal{C}^{\infty}$ à support compact inclus dans~$]0, \infty [$,
lorsque les réels~$x, y, R, q_0$ et~$\beta$ vérifient
\[ (x, y) \in \cD^*(1, c_0) \mbox{, } x \leq R \mbox{, } q_0 \in S(R^{1/2}, y) \mbox{ et } \beta \in [-R^{\delta-1}, R^{\delta-1}]\]
on ait
\begin{equation}\label{estim_m_psi}
M_\Phi(x, y ; q_0, \beta) = \widetilde{M}_\Phi(x, y ; q_0, \beta) + O_{\delta, \Phi} \left( x q_0^{-1/2} R^{-1/2+\delta} \right) + O_{\delta, \Phi} \left(  \frac{q_0^{-\alpha+\delta} \Psi(x,y)}{(1+|\beta|x) u} \right)
.\end{equation}
 
\end{prop}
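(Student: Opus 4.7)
The plan is to write $M_\Phi(x, y; q_0, \beta)$ as a Mellin integral along the line $\Re s = \alpha$ and to apply the saddle-point method, following the general strategy of Hildebrand--Tenenbaum for $\Psi(x, y)$. Mellin inversion of the test function yields $\Phi(n/x)\,\e(n\beta) = \frac{1}{2 i \pi}\int_{(\sigma)} \cPhi(\beta x, s)(x/n)^s\,\dd s$ for $\sigma > 0$, so that, after summing and swapping the orders,
\[ M_\Phi(x, y ; q_0, \beta) = \frac{1}{2 i \pi} \int_{(\alpha)} \cPhi(\beta x, s) F_{q_0}(s, y) x^s \, \dd s, \]
with $F_{q_0}(s, y) := \sum_{P^+(n) \leq y} \mu(q_0/(q_0, n)) \varphi(q_0/(q_0, n))^{-1} n^{-s}$. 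A direct Eulerian computation, using $P^+(q_0) \leq y$, factorises $F_{q_0}(s, y) = \zeta(s, y) g_{q_0}(s)$, where $g_{q_0}$ is multiplicative in $q_0$, satisfies $g_{q_0}(\alpha) = q_0^{-\alpha} \prod_{p | q_0}\bigl(1 - (p^\alpha - 1)/(p-1)\bigr)$, and obeys $|g_{q_0}(s)| \ll q_0^{-\alpha + \delta}$ uniformly on $\Re s = \alpha$.

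I would then cut the contour $\Re s = \alpha$ into four pieces according to $|\tau|$, where $s = \alpha + i\tau$: the saddle arc $|\tau| \leq T_0 := u^{-1/3}/\log y$, the two intermediate ranges $T_0 < |\tau| \leq 1/\log y$ and $1/\log y < |\tau| \leq y$, and the far tail $|\tau| > y$. On the saddle arc, the cubic Taylor expansion of $\log\zeta(s, y)$ at $\alpha$ supplied by Lemma~\ref{ht_zeta_taylor}, combined with the smoothness of $g_{q_0}$ and of $\cPhi(\beta x, \cdot)$ at $\alpha$, allows one to factor out $g_{q_0}(\alpha)\cPhi(\beta x, \alpha)$. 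Completing the Gaussian integration and matching with Lemmas~\ref{ht_psi_integ} and~\ref{ht_integ_asympt} identifies the central contribution as $\alpha\,g_{q_0}(\alpha)\,\cPhi(\beta x, \alpha)\,\Psi(x, y) = \widetilde{M}_\Phi(x, y; q_0, \beta)$, with a relative error $O(1/u)$ coming from the cubic Taylor remainder.

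For the remaining ranges the crucial ingredient is the Schwartz estimate $|\cPhi(\beta x, s)| \ll_k (1+|s|)^k/(1+|\beta|x)^k$ of Proposition~\ref{SoundLaga2011_lemmas_33_35}. On the first intermediate range, Lemma~\ref{ht_majo_tau_mid} contributes $\ll q_0^{-\alpha + \delta} \Psi(x, y)/((1+|\beta|x) u)$; on the second, Lemma~\ref{ht_majo_zeta} yields an even smaller contribution after integration. On the far tail $|\tau| > y$, the GRH-conditional bound $\zeta(\alpha + i\tau, y) \ll |\tau|^\delta$ from Proposition~\ref{SoundLaga2011_prop_5_1} is dominated by the Schwartz decay of $\cPhi$, which together with $|g_{q_0}| \ll q_0^{-\alpha+\delta}$ yields the first error $x q_0^{-1/2} R^{-1/2+\delta}$. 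The main obstacle is to secure the saving $1/u$ rather than $1/\log y$ on the first intermediate range: the trick is to apply Lemma~\ref{ht_majo_tau_mid} as an integrated bound, the rapid decay of $\cPhi$ in $|s|$ effectively playing the role of the missing $1/|s|$ weight present in the Hildebrand--Tenenbaum integrand but absent from ours.
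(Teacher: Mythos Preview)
Your overall strategy---Mellin inversion, the factorisation $F_{q_0}(s,y)=\zeta(s,y)g_{q_0}(s)$, and the saddle-point analysis on $|\tau|\leq 1/\log y$---matches the paper, and the identification of the main term with $\widetilde M_\Phi$ together with the $O(1/u)$ relative error on the central arc is correct. The genuine gap is the treatment of the tails: you cannot stay on the line $\Re s=\alpha$ for all $\tau$.

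On $\Re s=\alpha$ the prefactor is $x^\alpha$, and the only way $R$ enters is through $1+|\beta|x\leq 1+R^\delta$, which any integrated bound on $\cPhi$ (Proposition~\ref{SoundLaga2011_lemmas_33_35}) inserts with a \emph{positive} exponent. There is therefore no mechanism to produce the saving $R^{-1/2}$ in the first error term: even when $|\beta|x\ll 1$, the bound $x^\alpha q_0^{-\alpha+\delta}\cdot(\text{small})$ is not $\ll x\,q_0^{-1/2}R^{-1/2+\delta}$ once $R$ is large relative to $x$, since $x^{\alpha-1}R^{1/2-\delta}\geq x^{\alpha-1/2-\delta}\to\infty$. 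Nor can the tail be absorbed into the second error term when $|\beta|x$ is large, because that term carries $(1+|\beta|x)^{-1}$. The same obstruction hits your range $1/\log y<|\tau|\leq y$: over an interval of length $\asymp y$ the best pointwise bound $|\cPhi|\ll(1+|s|)/(1+|\beta|x)$ gives $\int|\cPhi|\,\dd\tau\ll y^2/(1+|\beta|x)$, and $\zeta(\alpha,y)e^{-c_2 u}$ cannot absorb $y^2$ in $\cD^*(1,c_0)$ where $\log y$ may be as large as $H(u)^{c_0}$. (Incidentally, the Schwartz bound you quote, $|\cPhi|\ll(1+|s|)^k/(1+|\beta|x)^k$, grows with $|s|$ and is the wrong direction for a tail estimate.)

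What the paper does---and what is singled out in the introduction as ``un changement dans le choix d'un contour''---is to deform the contour away from $\Re s=\alpha$ for large $|\tau|$. For $|\tau|\geq y^{1/2-\ee}$ one integrates on $\Re s=1/2+\ee$: there $x^s\ll x^{1/2+\ee}\leq xR^{-1/2+\ee}$ supplies the $R^{-1/2}$ directly, GRH (Proposition~\ref{SoundLaga2011_prop_5_1}) gives $\zeta(s,y)\ll|\tau|^\ee$, and the $\cPhi$-integral contributes $(1+|\beta|x)^{1/2+2\ee}\ll R^{\delta}$, yielding precisely $x\,q_0^{-1/2}R^{-1/2+\delta}$. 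For $y^{1-\alpha}\leq|\tau|\leq y^{1/2-\ee}$ the paper follows the curve $\sigma=1-(\log|\tau|)/\log y$, on which GRH still applies, to connect back to $\Re s=\alpha$. Only the short segment $1/\log y\leq|\tau|\leq y^{1-\alpha}$ (length $\asymp u\log u$) remains on $\Re s=\alpha$ and uses Lemma~\ref{ht_majo_zeta}; there $\int|\cPhi|\,\dd\tau\ll y^{2(1-\alpha)}/(1+|\beta|x)$, and $H(u)^{-c_2/2}$ beats $y^{2(1-\alpha)}\asymp(u\log u)^2$. This contour deformation is the essential missing ingredient in your argument.
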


\begin{proof}
Soit $c_0$ un réel vérifiant $0 < c_0 < \min(c_1/2, c_2/8)$, où $c_1$ et $c_2$ sont les constantes des lemmes~\ref{ht_psi_integ} et~\ref{ht_majo_zeta}. On choisit $x$, $y$, $R$, $q_0$ et $\beta$ comme dans l'énoncé.
On part de l'identité suivante valable pour tout~$\sigma>0$, obtenue en appliquant une transformation de Mellin,
\begin{equation}\label{expr_m_integ}
M_\Phi(x, y ; q_0, \beta) = \frac{1}{2i\pi}\int_{\sigma-i\infty}^{\sigma+i\infty}{\zeta(s ; y, q_0) x^s \cPhi(\beta x, s)\dd s }
\end{equation}
avec la notation
\[ \zeta(s ; y, q_0) = \sum_{P^+(n) \leq y}{\frac{\mu(q_0 / (q_0, n))}{\varphi(q_0 / (q_0, n)} n^{-s}} .\]
On remarque que l'on a
\[ \zeta(s ; y, q_0) = s q_0^{-s} \prod_{p | q_0}{\left(1-\frac{p^s-1}{p-1}\right)} \zeta(s ; y) .\]
On suit la méthode du col afin d'estimer l'intégrale qui apparaît dans~\eqref{expr_m_integ}. Posons $s = \sigma + i \tau$.
L'abscisse d'intégration est déplacée en~$\sigma = \alpha$ pour les $\tau$ petits.
Pour les plus grandes valeurs de~$\tau$, on se déplace progressivement vers la droite $\sigma = 1/2$ où le facteur $x^s$ a moins d'importance, en tirant avantage de l'hypothèse de Riemann.

Soit~$\ee>0$. On intègre sur le contour $\cup_{i=1}^{7}{\cC_j}$ où
\begin{itemize}
\item $\cC_1$ est la demi-droite $]1/2 + \ee - i \infty, 1/2 + \ee - i y^{1/2-\ee}]$,
\item $\cC_2$ est le chemin~$\left\lbrace 1 - (\log(-\tau)) / \log y + i \tau, \tau \in [-y^{1/2-\ee}, -y^{1-\alpha}] \right\rbrace$,
\item $\cC_3$ est le segment~$[ \alpha  - i y^{1-\alpha} ,  \alpha  - i / \log y]$,
\item $\cC_4$ est le segment~$[ \alpha  - i / \log y,  \alpha  + i / \log y]$,
\item $\cC_5$ est le segment~$[ \alpha  + i / \log y,  \alpha  + i y^{1-\alpha}]$,
\item $\cC_6$ est le chemin~$\left\lbrace 1 - (\log \tau)/\log y + i \tau, \tau \in [ y^{1-\alpha},  y^{1/2-\ee}] \right\rbrace$,
\item $\cC_7$ est la demi-droite $]1/2 + \ee - i \infty, 1/2 + \ee - i y^{1/2-\ee}]$,
\end{itemize}
chacun de ces chemins étant parcouru par parties imaginaires croissantes. Les chemins $\cC_j$ et~$\cC_{8-j}$ ($1 \leq j \leq 3$) étant conjugués, on se contentera de traiter
les chemin~$\cC_4$ à $\cC_7$. On note également
\[ I_j := \frac{1}{2 i \pi}\int_{\cC_j}{\zeta(s ; y, q_0) x^s \cPhi(\beta x, s)\dd s } \]
en remarquant que~$I_j = \overline{I_{8-j}}$ ($1 \leq j \leq 3$).

\begin{center}
\begin{picture}(80,80)
\linethickness{0.2mm}
\put(0, 40){ \vector(1, 0){80} }
\put(18, 0){ \vector(0, 1){80} }
\put(40, 39){ \line(0, 1){2} }
\put(17, 10){ \line(1, 0){2} }
\put(17, 20){ \line(1, 0){2} }
\put(17, 30){ \line(1, 0){2} }
\put(17, 50){ \line(1, 0){2} }
\put(17, 60){ \line(1, 0){2} }
\put(17, 70){ \line(1, 0){2} }

\put(59, 30){ \line(1, 0){2} }
\put(59, 50){ \line(1, 0){2} }

\put(40,  0){ \line(0, 1){10} }
\qbezier(41, 10)(52, 18)(61, 20)
\put(60, 20){ \line(0, 1){40} }
\qbezier(61, 60)(52, 62)(41, 70)
\put(40, 70){ \line(0, 1){10} }

\put(40,  0){ \vector(0, 1){ 5} }
\put(60, 20){ \vector(0, 1){ 5} }
\put(60, 30){ \vector(0, 1){ 5} }
\put(60, 40){ \vector(0, 1){ 5} }
\put(60, 50){ \vector(0, 1){ 5} }
\put(40, 70){ \vector(0, 1){ 5} }

\put( 6, 68){ $y^{1/2-\ee}$ }
\put( 8, 58){ $y^{1-\alpha}$ }
\put( 4, 48){ $1/\log y$ }
\put( 1, 28){ $-1/\log y$ }
\put( 5, 18){ $-y^{1-\alpha}$ }
\put( 3,  8){ $-y^{1/2-\ee}$ }
\put(19, 41){ $0$ }
\put(41, 41){ $1/2+\ee$ }
\put(61, 41){ $\alpha$ }

\put(42,  4){ $\cC_1$ }
\put(48, 19){ $\cC_2$ }
\put(62, 24){ $\cC_3$ }
\put(62, 35){ $\cC_4$ }
\put(62, 53){ $\cC_5$ }
\put(48, 66){ $\cC_6$ }
\put(42, 74){ $\cC_7$ }

\end{picture}
\end{center}

\bigskip

Sur le segment~$\cC_7$, les Lemmes~\ref{SoundLaga2011_prop_5_1} et~\ref{SoundLaga2011_lemmas_33_35} fournissent,
$$\zeta(s ; y) \ll_{\ee} q_0^{-1/2 + \ee} \tau^\ee \text{ et } x^s \ll x^{1/2+\ee} .$$
De plus,
\[ \int_{|\tau| > y^{1/2-\ee}}{ |\cPhi(\beta x, 1/2+\ee+\tau)| \tau^\ee } \ll_{\ee, \Phi} (1 + |\beta| x)^{1/2 + 2\ee}.\]
Donc quitte à prendre $\ee < \delta/(2 + 4\delta)$,
\[ I_1 \ll_{\delta, \Phi} x q_0^{-1/2} R^{-1/2+\delta}. \]

\medskip

Sur le segment~$\cC_6$, on intègre suivant $\sigma$.
On a
\[ I_6 = \frac{1}{2 i \pi} \int_{1/2 + \ee}^{\alpha}{
   \zeta(\sigma + i y^{1-\sigma} ; y, q_0) x^{\sigma + i y^{1-\sigma}} \cPhi(\beta x, \sigma + i y^{1-\sigma})
     (i (\log y) y^{1-\sigma} - 1) \dd \sigma} .\]
On a les majorations
\[ \zeta(s ; y, q_0) \ll_\delta q_0^{-\sigma + \delta/2} |\zeta(s ; y)|
   \ll_\delta q_0^{-\sigma+\delta} y^{\delta(1-\sigma)} \]
\[ \cPhi(\beta x, s) \ll_\Phi \frac{y^{1-\sigma}}{1 + |\beta| x} \]
\[ i (\log y) y^{1-\sigma} - 1 \ll (\log y) y^{1-\sigma} .\]
En reportant dans l'intégrale, on obtient pour une certaine constante $c_3 > 0$,
\begin{align*}
I_6 &\ \ll_{\delta, \Phi} \frac{q_0^{\delta} \log y}{1 + |\beta| x}
          \int_{1/2+\ee}^{\alpha}{ q_0^{-\sigma} x^\sigma y^{(2+\delta)(1-\sigma)} \dd \sigma} \\
    & \ll \frac{x^\alpha}{\log x} \frac{q_0^{-\alpha + \delta}}{1+|\beta|x}
          (\log y) y^{(2+\delta)(1-\alpha)} \\
    & \ll \frac{x^\alpha \zeta(\alpha ; y)}{\sqrt{u} \log y} \frac{q_0^{-\alpha + \delta}}{1+|\beta|x}
          \frac{\log y}{\sqrt{u}} (u \log u)^{2+\delta} \exp(-c_3 u) \\
    & \ll \frac{x^\alpha \zeta(\alpha ; y)}{\sqrt{u} \log y} \frac{q_0^{-\alpha + \delta}}{1+|\beta|x}
          \exp \left( -\frac{c_3}{2} u \right)
.\end{align*}
La contribution du chemin~$\cC_6$ est donc largement un terme d'erreur acceptable.

\medskip

La contribution du segment~$\cC_5$ s'écrit
\[ I_5 = \frac{1}{2 i \pi} \int_{1/\log y}^{y^{1-\alpha}}{
   \zeta( \alpha  + i \tau ; y, q_0) x^{ \alpha  + i \tau} \cPhi(\beta x, \alpha  + i \tau) \dd \tau} .\]
On utilise le Lemme~\ref{ht_majo_zeta}. On dispose pour~$\tau \in [1/\log y, y^{1-\alpha}]$ de la majoration
\begin{align*}
|\zeta(s ; y)| &\ \leq \zeta( \alpha  ; y) \exp \left( - c_2 u \frac{\tau^2}{(1- \alpha )^2 + \tau^2} \right) \\
& \leq \zeta( \alpha  ; y) \exp \left( - c_2 u \frac{1}{((1-\alpha)\log y)^2 + 1} \right) \\
& \leq \zeta( \alpha  ; y) H(u)^{-c_2/2}
.\end{align*}
On a donc
\begin{align*}
I_5 &\ \ll_\delta x^{ \alpha } \zeta( \alpha  ; y)
          H(u)^{-c_2/2} q_0^{- \alpha +\delta}
     \int_{1/(\log y)}^{y^{1-\alpha}}{|\cPhi(\beta x, \alpha  + i \tau)| \dd \tau} \\
    & \ll_{\delta, \Phi} x^{ \alpha } \zeta( \alpha  ; y)
          H(u)^{-c_2/2} q_0^{- \alpha +\delta}
     \frac{y^{2(1-\alpha)}}{1+|\beta| x}
.\end{align*}
L'intégrale sur $\cPhi$ est évaluée grâce au Lemme~\ref{SoundLaga2011_lemmas_33_35}, en distinguant suivant $|\beta| x \leq y^{1-\alpha}$ ou $|\beta| x > y^{1-\alpha}$.
Finalement, en utilisant~$y^{1-\alpha} \sim u \log u$ et d'après l'hypothèse~$c_0 < c_2/8$, on obtient
\begin{align*}
I_3 &\ \ll_{\delta, \Phi} \frac{x^{ \alpha } \zeta( \alpha  ; y)}{\sqrt{u} \log y} \frac{q_0^{- \alpha +\delta}}{1+|\beta| x}
          \exp \left( - c_2/2 \frac{u}{(\log u)^2} + 1/2\log u +  2 \log (u \log u) + \log \log y \right) \\
    & \ll \frac{x^{ \alpha } \zeta( \alpha  ; y)}{\sqrt{u} \log y} \frac{q_0^{- \alpha +\delta}}{1+|\beta| x}
           H(u)^{-c_2/8}
.\end{align*}
Ainsi~$I_5$ est de l'ordre du terme d'erreur annoncé.

\medskip

Sur le segment~$\cC_4$, la quantité à estimer est
\[ I_4 := \frac{1}{2 \pi} \int_{-1/\log y}^{1/\log y}{\zeta( \alpha  + i \tau ; y, q_0)
                                                    x^{ \alpha  + i \tau} \cPhi(\beta x,  \alpha  + i \tau) \dd \tau} .\]
On sépare l'intégrale en deux, selon la position de~$|\tau|$ par rapport à $T_0 = 1 / (u^{1/3} \log y)$.
D'après le Lemme~\ref{ht_majo_tau_mid}, on a
\begin{align*}
\frac{1}{2 i \pi} \int_{T_0 \leq |\tau| \leq 1/\log y}{ \zeta(s ; y, q_0) x^s \check{\Phi}(\beta x, s) \dd s}
& \ll_{\delta, \Phi} \frac{q_0^{-\alpha + \delta}}{1 + |\beta| x} \int_{T_0 \leq |\tau| \leq 1/\log y}{ |\zeta(s ; y) x^s | \dd s} \\
& \ll \Psi(x, y) \frac{q_0^{-\alpha + \delta}}{(1 + |\beta| x) u}
.\end{align*}
Pour l'autre partie de l'intégrale, le Lemme~\ref{ht_zeta_taylor} fournit le développement de Taylor suivant, valable pour~$|\tau| \leq T_0$,
\[ \zeta(s ; y) \frac{x^s}{s} = \frac{x^\alpha \zeta(\alpha ; y)}{\alpha} e^{-\tau^2 \sigma_2 / 2}
   \left( 1 - i \frac{\tau}{\alpha} - i \frac{\tau^3}{3!}\sigma_3 + O(\tau^6 \sigma_3^2 + \tau^2 + \tau^4 \sigma_4) \right) .\]
Pour traiter le facteur restant dans l'intégrale, on note
\[ f(\tau) := s q_0^{-s} \prod_{p | q_0}{\left( 1 - \frac{p^s - 1}{p - 1} \right) } \cPhi(\beta x, s) .\]
En remarquant que pour tout~$k \in {\mathbf N}$, l'application
$t \mapsto (\log t)^k \Phi(t)$ est également $\cC^\infty$ de même support que~$\Phi$, on déduit
\[ \cPhi(\beta x, \alpha) \ll_\Phi \frac{1}{1+|\beta|x} \]
\[ \frac{\partial\cPhi}{\partial s}(\beta x, \alpha) \ll_\Phi \frac{1}{1+|\beta|x} \]
\[ \sup_{|\tau| \leq T_0}\left| \frac{\partial^2\cPhi}{\partial s^2}(\beta x, s) \right| \ll_\Phi \frac{1}{1+|\beta|x} \]
ce qui entraîne
\[ |f(0)| + |f'(0)| + \sup_{|\tau| \leq T_0}|f''(\tau)| \ll_{\delta, \Phi} q_0^{-\alpha+\delta}/(1+|\beta|x) .\]
On a donc pour~$|\tau| \leq T_0$,
\[ f(\tau) = f(0) + f'(0) \tau + O_{\delta, \Phi}\left( \frac{q_0^{-\alpha+\delta}}{1+|\beta|x} \tau^2 \right). \]
Ainsi lorsque l'on multiplie ce développement limité avec celui de~$\zeta(s ; y) x^s / s$, on obtient
\begin{multline*}
\zeta(s ; y, q_0) x^s \cPhi(\beta x, s) = 
   \frac{x^\alpha \zeta(\alpha ; y)}{\alpha} e^{-\tau^2 \sigma_2 / 2} \\
   \times \left( f(0) + \lambda \tau + \mu \tau^3 + 
          O_{\delta, \Phi}\left((\tau^2 + \sigma_3^2 \tau^6 + \sigma_4 \tau^4) \frac{q_0^{-\alpha+\delta}}{1+|\beta|x}\right)
   \right)
\end{multline*}
où les coefficients $\lambda$ et~$\mu$ dépendent au plus de~$x$, $y$, $q_0$ et~$\delta$.
En intégrant cette expression pour~$|\tau| \leq T_0$, on obtient
\begin{multline*}
\frac{1}{2 i \pi} \int_{|\tau| \leq T_0}{ \zeta(s ; y, q_0) x^s \check{\Phi}(\beta x, s) \dd s} = 
 \frac{x^\alpha \zeta(\alpha ; y)}{2 \pi \alpha} f(0) \int_{|\tau| \leq T_0}{ e^{-\tau^2 \sigma_2 / 2} \dd \tau } \\
+ \frac{x^\alpha \zeta(\alpha ; y)}{2 \pi \alpha} \frac{q_0^{-\alpha+\delta}}{1+|\beta|x} \int_{|\tau| \leq T_0}{ e^{-\tau^2 \sigma_2 / 2}
   O_{\delta, \Phi}\left( \tau^2 + \sigma_3^2 \tau^6  + \sigma_4 \tau^4 \right) \dd \tau }
\end{multline*}
Le premier terme du membre de droite vaut
\[ f(0) \frac{x^\alpha \zeta(\alpha ; y)}{\alpha \sqrt{2 \pi \sigma_2}} \left( 1 + O\left(\frac{1}{u}\right) \right) .\]
Quant au deuxième, la majoration $\int_{{\mathbf R}}{|\tau|^k \exp(-\tau^2 \sigma_2 / 2) \dd \tau} \ll_k \sigma_2^{-(k+1)/2}$ permet d'écrire qu'il est
\begin{align*}
& \ll_{\delta, \Phi} \frac{x^\alpha \zeta(\alpha ; y)}{\sqrt{\sigma_2} \alpha} 
  \left( \sigma_2^{-1} + \sigma_3^2 \sigma_2^{-3} + \sigma_4 \sigma_2^{-2} \right) \frac{q_0^{-\alpha+\delta}}{1+|\beta|x}
 \ll \frac{x^\alpha \zeta(\alpha ; y) q_0^{-\alpha+\delta}}{\alpha u \sqrt{\sigma_2} (1+|\beta|x)}
\end{align*}
en utilisant l'approximation $\sigma_k \asymp u (\log y)^k$ ($2\leq k\leq 4$) énoncée au Lemme~\ref{ht_zeta_taylor}.

On obtient donc
\[
I_4 = \alpha q_0^{-\alpha} \prod_{p | q_0}{\left( 1 - \frac{p^{\alpha} - 1}{p - 1} \right) } \cPhi(\beta x, \alpha)
    \frac{x^\alpha \zeta(\alpha ; y)}{\alpha \sqrt{2 \pi \sigma_2}}
    + O_{\delta, \Phi} \left( \frac{q_0^{-\alpha + \delta} x^\alpha \zeta(\alpha ; y)}{u^{3/2} (\log y) (1 + |\beta| x)} \right)
.\]
On utilise maintenant les Lemmes~\ref{ht_psi_integ} et~\ref{ht_integ_asympt}, en vérifiant que les termes d'erreurs sont acceptables. 
Quitte à se restreindre à $\ee < 3/2$ et d'après l'hypothèse $c_0 < c_1/2$, où $c_1$ est la constante du Lemme~\ref{ht_psi_integ}, on a
\[ \exp((\log y)^{3/2 - \ee}) \gg u \sqrt{u} \log y \]
\[ H(u)^{c_1} \gg u \sqrt{u} H(u)^{{c_1}/2} \gg u \sqrt{u} \log y .\]
Il vient
\[
I_4 = \alpha q_0^{-\alpha} \prod_{p | q_0}{\left( 1 - \frac{p^{\alpha} - 1}{p - 1} \right)} \cPhi(\beta x, \alpha) \Psi(x, y)
 + O_{\delta, \Phi} \left( \frac{\Psi(x, y) q_0^{-\alpha + \delta}}{(1 + |\beta| x) u}\right)
.\]

\bigskip

En combinant les estimations obtenues pour~$I_j$ ($1 \leq j \leq 7)$, on obtient la formule voulue.

\end{proof}

\subsection{Estimation de \texorpdfstring{$N(A, B, C, y ; \Phi)$}{N(A, B, C, y ; Phi)}}

On applique à présent la méthode du cercle comme dans~\cite{SoundLaga2011} en utilisant l'estimation de la proposition~\ref{estim_m}.
Pour toutes les valeurs de~$A, B, C$ telles que~$2/3 < \alpha_A, \alpha_B, \alpha_C < 1$, on définit les quantités suivantes
\begin{multline}\label{def_sigma0_multi}
{\mathfrak S}_{0} = {\mathfrak S}_{0}(\Phi ; A, B, C, y) \\ := \alpha_A \alpha_B \alpha_C \int_{0}^{\infty} {
        \int_{0}^{\infty} { \Phi(t_1) \Phi(t_2) \Phi(\lambda_1 t_1+\lambda_2 t_2) t_1^{\alpha_A-1} t_2^{\alpha_B-1} \left(\frac{A}{C} t_1 + \frac{B}{C} t_2\right)^{\alpha_C - 1} \dd t_1} \dd t_2}
\end{multline}
\[ {\mathfrak S}_{1} = {\mathfrak S}_{1}(A, B, C, y) := \prod_{p} {
        \left( 1 + \frac{(p-1) (p - p^{\alpha_A}) (p - p^{\alpha_B}) (p - p^{\alpha_C})}{p(p^{\alpha_A + \alpha_B + \alpha_C - 1} - 1)(p-1)^3} \right) } .\]

\begin{theoreme}\label{thm_NABC}
Il existe une constante absolue $c_0>0$ telle que pour tout~$\ee>0$ et~$\Phi$ une fonction de classe $\mathcal{C}^{\infty}$ à support compact inclus dans $]0, \infty [$,
il existe un réel~$\eta = \eta(\ee) >0$ qui tend vers $0$ avec $\ee$ tel que pour tous~$(A, y)$, $(B, y)$ et~$(C, y)$ dans le domaine $\cD^*(4+\eta, c_0)$ avec $C^\ee \leq A \leq B \leq C$ on ait
\begin{equation}\label{eqv_pcp_ABC}
\begin{split}
N(A, B, C, y ; \Phi) =&\ {\mathfrak S}_{0} {\mathfrak S}_{1}
                 \frac{\Psi(A, y)\Psi(B, y)\Psi(C, y)}{C} \\
				&\ + O_{\ee, \Phi}\left(\frac{\Psi(A, y)\Psi(B, y)\Psi(C, y)}{u_{A} C}
                + C^{3/4+\ee}\sqrt{\Psi(A, y) \Psi(C, y)} \right)
\end{split}
\end{equation}
\end{theoreme}

Avant de prouver ce théorème, on montre qu'il implique l'estimation~\eqref{eqv_pcp}.
\begin{proof}[Démonstration de la première assertion du Théorème~\ref{th1}]
Soit~$\ee>0$. Le Théorème~\ref{thm_NABC} dans le cas $A = B = C$ assure l'existence d'une constante absolue $c_0>0$ et d'un réel~$\eta = \eta(\ee)$ tel que pour tout~$(x, y) \in \cD^*(4 + \eta, c_0)$, on ait
\[
N(x, y ; \Phi) = {\mathfrak S}_{0}(\Phi ; \alpha) {\mathfrak S}_{1}(\alpha)
                 \frac{\Psi(x, y)^3}{x} \\
				+ O_{\ee, \Phi} \left(\frac{\Psi(x, y)^3}{u x}
                + x^{3/4+\ee}\Psi(x, y) \right)
.\]
On choisit $\ee \leq 1/48$ et tel que~$4 + \eta(\ee) \leq 8$.
On vérifie alors que pour~$(x, y) \in \cD(8 + 384\ee, c_0)$, on a
\begin{equation}\label{estim_erreur2_erreur1} x^{3/4+\ee}\Psi(x, y) \ll_\ee \frac{\Psi(x, y)^3}{x^{1+\ee}} \end{equation}
ce qui implique l'estimation~\eqref{eqv_pcp} en vertu de~$u \ll_\ee x^\ee$.
\end{proof}


\begin{proof}[Démonstration du Théorème~\ref{thm_NABC}]
Ce qui suit reprend en grande partie la démonstration du théorème~2.1 de~\cite{SoundLaga2011}.
Soit~$c_0$ la constante absolue donnée par la Proposition~\ref{estim_m}. On se place dans les hypothèses de l'énoncé.
On suppose que~$C \ll_\Phi B$ : dans le cas contraire, le membre de gauche et le terme principal du membre de droite de~\eqref{eqv_pcp_ABC} sont tous les deux nuls.
On désigne par $x$ une quantité générique telle que~$(x, y) \in \cD^*(4, c_0)$, c'est-à-dire qui puisse jouer le rôle de~$A$, $B$ et~$C$.
Enfin, on note que l'hypothèse $C^\ee \leq A \leq B \leq C$ implique~$\alpha_A = \alpha_B + o_\ee(1) = \alpha_C + o_\ee(1)$ lorsque~$A$, $B$ et~$C$ tendent vers l'infini.

On définit les arcs majeurs comme dans~\cite{SoundLaga2011}. Soit~$\delta > 0$. Pour tout~$q \leq C^{1/4}$ et~$a$ premier avec $q$ avec $0 \leq a < q$, on définit $\mathfrak M(a, q)$
pour~$q>1$ comme l'ensemble des $\vartheta \in [0, 1]$ tels que~$|\vartheta - a/q| \leq C^{\delta-1}$, et pour~$q=1$ comme l'ensemble $[0, C^{\delta-1}] \cup [1-C^{\delta-1}, 1]$.
Un réel de l'intervalle $[0, 1]$ appartient à au plus un tel ensemble. On note $\mathfrak M$ la réunion de tous les $\mathfrak M(a, q)$ pour~$q\geq 1$ et~$0 \leq a < q, (a, q)=1$,
et~$\mathfrak m = [0, 1] \smallsetminus \mathfrak M$.

Ainsi qu'il est montré dans la preuve du théorème~2.1 de~\cite{SoundLaga2011}, on a $E_\Phi(C, y ; \vartheta) \ll_{\ee, \Phi} C^{3/4+\ee}$ pour tout~$\vartheta \in \mathfrak m$.
On a donc par l'inégalité de Cauchy-Schwarz
\begin{align*}
\int_{\mathfrak m}{E_\Phi( A, y ; \vartheta) E_\Phi( B, y ; \vartheta) }&{ E_\Phi( C, y ; -\vartheta) \dd \vartheta} \\
& \ll_{\ee, \Phi} C^{3/4+\ee} \int_{0}^{1}{|E_\Phi(A, y ; \vartheta)| |E_\Phi(B, y ; -\vartheta)| \dd \vartheta} \\
& \ll C^{3/4+\ee} \sqrt{\Psi(A, y) \Psi(B, y)}
. \end{align*}

On examine maintenant les arcs majeurs. En utilisant trois fois la Proposition~\ref{SoundLaga2011_prop_6_1}, on obtient pour
$\vartheta \in \mathfrak M(a, q)$ avec $q\leq C^{1/4}$ et~$\vartheta = a/q + \beta$,
\begin{multline}\label{estim_E3_M3}
E_\Phi(A, y ; \vartheta) E_\Phi(B, y ; \vartheta) E_\Phi(C, y ; -\vartheta) = 
M_\Phi(A, y ; q, \beta) M_\Phi(B, y ; q, \beta) M_\Phi(C, y ; q, -\beta) \\
 + O_{\ee, \Phi}\left( B^{3/4+\ee/2} |E_\Phi(A, y ; \vartheta)| |E_\Phi(C, y ; \vartheta)| +
           A^{3/4+\ee/2} |M_\Phi(B, y ; q, \beta)|  |E_\Phi(C, y ; \vartheta)| + \right. \\
   \left.  C^{3/4+\ee/2} |M_\Phi(B, y ; q, \beta)|  |M_\Phi(A, y ; q, \beta)| \right)
.\end{multline}
Avant d'intégrer ceci pour~$|\beta| \leq C^{\delta-1}$ et sommer pour~$q \leq C^{1/4}$, on montre la majoration suivante
\begin{equation}\label{majo_integ_m}
I = I(x, y ; C, q_0) := \int_{-C^{\delta-1}}^{C^{\delta-1}}{ |M_\Phi(x, y ; q_0, \beta)|^2 \dd \beta}  \ll_{\ee, \Phi} q_0^{-1-\alpha+\ee/3} (\log x) \Psi(x, y)
\end{equation}
où~$P^+(q_0) \leq y$. Par définition de~$M_\Phi(x, y ; q_0, \beta)$ on a 
\begin{multline*}
 I = 2\sum_{P^+(n) \leq y}{\frac{1}{\varphi\left(\frac{q_0}{(q_0, n)}\right)^2}\Phi\left(\frac{n}{x}\right)^2 C^{\delta-1}} \\
   + 2 \Re \Bigg( \sum_{P^+(n) \leq y}{\sum_{\substack{m \geq n+1 \\ P^+(m) \leq y}}{
    \frac{1}{\varphi\left(\frac{q_0}{(q_0, n)}\right)}\frac{1}{\varphi\left(\frac{q_0}{(q_0, m)}\right)}
    \Phi\left(\frac{n}{x}\right)\Phi\left(\frac{m}{x}\right)
    h(m-n)}} \Bigg)
\end{multline*}
où~$h(r) = (\e(r C^{\delta-1}) - \e(-r C^{\delta-1}))/r$. On a pour tout~$r\geq 1$, $h(r) \ll C^{\delta-1}/(1+r C^{\delta-1})$.
On~note $I_1$ et~$I_2$ les deux sommes de cette estimation.
On a
\begin{align*}
I_1&\ \ll C^{\delta-1} \sum_{d | q_0}{\frac{1}{\varphi(q_0/d)^2} \sum_{P^+(n') \leq y}{\Phi(n' d / x)^2}} \\
 & \ll_\Phi C^{\delta-1} \sum_{d | q_0}{\frac{1}{d^\alpha \varphi(q_0/d)^2} \Psi(x, y)}
 \ll C^{\delta-1} q_0^{-\alpha} \Psi(x, y)
.\end{align*}
On note en effet que si le support de~$\Phi$ est inclus dans $[0, K]$
\[ \sum_{P^+(n') \leq y}{\Phi(n' d / x)} \ll_\Phi \Psi(K x/d, y) \ll_\Phi \Psi(x, y)/d^\alpha .\]
On écrit $I_2$ en divisant la somme en deux selon la taille de~$m-n$ par rapport à $C^{1-\delta}$ :
\begin{multline*}
 I_2 \ll C^{\delta-1}\sum_{P^+(n) \leq y}{\frac{1}{\varphi\left(\frac{q_0}{(q_0, n)}\right)}\Phi\left(\frac{n}{x}\right)
     \sum_{\substack{n+1 \leq m \leq n+C^{1-\delta} \\ P^+(m) \leq y}}{
    \frac{1}{\varphi\left(\frac{q_0}{(q_0, m)}\right)}
    \Phi\left(\frac{m}{x}\right) }} \\
   + \sum_{P^+(n) \leq y}{\frac{1}{\varphi\left(\frac{q_0}{(q_0, n)}\right)}\Phi\left(\frac{n}{x}\right)
     \sum_{\substack{m > n + C^{1-\delta} \\ P^+(m) \leq y}}{
    \frac{1}{\varphi\left(\frac{q_0}{(q_0, m)}\right)} \Phi\left(\frac{m}{x}\right) \frac{1}{m-n} }}
.\end{multline*}
On note $I_{2,1}$ et~$I_{2,2}$ les sommes qui apparaissent au membre de droite.
On a d'une part
\begin{align*}
I_{2,1} &\ \leq \sum_{P^+(n) \leq y}{\frac{1}{\varphi\left(\frac{q_0}{(q_0, n)}\right)}\Phi\left(\frac{n}{x}\right)
      \sum_{d | q_0}{\frac{1}{\varphi(q_0/d)} C^{\delta-1}
      \sum_{\frac{n+1}{d} \leq m \leq \frac{n+C^{1-\delta}}{d}}{\Phi\left(\frac{m' d}{x}\right) }}} \\
&  \ll  \sum_{P^+(n) \leq y}{\frac{1}{\varphi\left(\frac{q_0}{(q_0, n)}\right)}\Phi\left(\frac{n}{x}\right)
      \sum_{d | q_0}{\frac{1}{d \varphi(q_0/d)} }} \\
&  \ll_\ee q_0^{-1+\ee/4} \sum_{P^+(n) \leq y}{\frac{1}{\varphi\left(\frac{q_0}{(q_0, n)}\right)}\Phi\left(\frac{n}{x}\right)} \\
&  \leq q_0^{-1+\ee/4} \sum_{d | q_0}{\frac{1}{\varphi(q_0/d)} \sum_{P^+(n') \leq y}{\Phi\left(\frac{n' d}{x}\right)} } \\
&  \ll_\Phi q_0^{-1+\ee/4} \Psi(x, y) \sum_{d | q_0}{\frac{1}{d^\alpha \varphi(q_0/d)} } \\
&  \ll_\ee q_0^{-1-\alpha+\ee/3} \Psi(x, y)
\end{align*}
et d'autre part, si le support de~$\Phi$ est inclus dans $[0, K]$,
\begin{align*}
I_{2,2} &\ \ll_\Phi \sum_{P^+(n) \leq y}{\frac{1}{\varphi\left(\frac{q_0}{(q_0, n)}\right)}\Phi\left(\frac{n}{x}\right)
     \sum_{n + C^{1-\delta} \leq m \leq K x}{ \frac{1}{\varphi\left(\frac{q_0}{(q_0, m)}\right)} \frac{1}{m-n} }} \\
& \leq \sum_{P^+(n) \leq y}{\frac{1}{\varphi\left(\frac{q_0}{(q_0, n)}\right)}\Phi\left(\frac{n}{x}\right)
     \sum_{d | q_0}{ \frac{1}{\varphi(q_0/d)} \sum_{\frac{n + C^{1-\delta}}{d} \leq m' \leq \frac{K x}{d}}{ \frac{1}{m'd-n} }} }
.\end{align*}
La somme en~$m'$ peut être majorée par
\[ \int_{\frac{n+C^{1-\delta}}{d}-1}^{\frac{K x}{d}}{ \frac{\dd t}{d t - n} } = \frac{1}{d} \int_{C^{1-\delta} - d}^{K x - n}{\frac{\dd t}{t}} \ll_\Phi \frac{1}{d} \log x \]
en vertu de~$d \leq q_0 \leq C^{1/4} = o(C^{1-\delta})$ pour~$\delta$ assez petit.
Il vient
\begin{align*}
I_{2,2} &\ \ll_\Phi \log x \sum_{P^+(n) \leq y}{\frac{1}{\varphi\left(\frac{q_0}{(q_0, n)}\right)}\Phi\left(\frac{n}{x}\right)
     \sum_{d | q_0}{ \frac{1}{d \varphi(q_0/d)} }} \\
& \ll_\ee q_0^{-1+\ee/4} (\log x) \sum_{P^+(n) \leq y}{\frac{1}{\varphi\left(\frac{q_0}{(q_0, n)}\right)}\Phi\left(\frac{n}{x}\right) } \\
& \ll_\Phi q_0^{-1+\ee/4} (\log x) \Psi(x, y) \sum_{d | q_0}{ \frac{1}{d^\alpha \varphi(q_0/d)} } \\
& \ll_\ee q_0^{-1-\alpha+\ee/3} (\log x) \Psi(x, y)
.\end{align*}
On a donc, en tenant compte de~$q_0 \leq C^{1/4}$,
\[ I \ll_{\ee, \Phi} q_0^{-1-\alpha + \ee/3} (\log x) \Psi(x, y) .\]
À propos du terme en~$q_0$, on note qu'en sommant sur $q \leq C^{1/4}$ on a
\begin{equation}\label{majo_somme_q}
 \sum_{q \leq C^{1/4}}{\frac{\varphi(q_0)}{\varphi(q_1)} q_0^{-1-\alpha+\ee/3} }
  \leq \Bigg( \sum_{q_0 \in S(C^{1/4},\ y)}{\varphi(q_0) q_0^{-1-\alpha+\ee/3}} \Bigg)
      \Big( \sum_{q_1 \leq C^{1/4}}{\varphi(q_1)^{-1}} \Big)
 \ll_\ee C^{\ee/3}
\end{equation}
où on écrit de manière unique~$q = q_0 q_1$ avec $P^+(q_0) \leq y$ et~$P^-(q_1)>y$. La somme sur $q_0$ est traitée par une sommation d'Abel, grâce à l'estimation~\eqref{psi_x_alpha}.

On intègre maintenant les termes d'erreur de~\eqref{estim_E3_M3} sur les arcs majeurs.
On a un premier terme
\[ \ll_{\ee, \Phi} B^{3/4+\ee/2} \int_{\mathfrak M}{|E_\Phi(A, y ; \vartheta)| |E_\Phi(C, y ; \vartheta)| \dd \vartheta} \leq B^{3/4+\ee/3} \sqrt{\Psi(A, y) \Psi(C, y)} .\]
En utilisant $|M_\Phi(B, y ; q, \beta)| |E_\Phi(C, y ; \vartheta)| \ll |M_\Phi(B, y ; q, \beta)|^2 + |E_\Phi(C, y ; \vartheta)|^2 $, on voit que le deuxième terme d'erreur de~\eqref{estim_E3_M3} donne après intégration un terme
\[ \ll_{\ee, \Phi} A^{3/4+\ee/2} \Big\{ \Psi(C, y) + \sum_{q \leq C^{1/4}}{\varphi(q) \int_{-C^{1-\delta}}^{C^{1-\delta}}{ |M_\Phi(B, y ; q, \beta)|^2 }} \Big\} .\]
Les majorations~\eqref{majo_integ_m} et~\eqref{majo_somme_q} montrent que ceci est
\[ \ll_{\ee, \Phi} A^{3/4+\ee/2} \left\{ \Psi(C, y) + (\log B) \Psi(B, y) C^{\ee/3} \right\} .\]
Le dernier terme d'erreur de~\eqref{estim_E3_M3} devient après sommation et intégration un terme d'erreur
\[ \ll_{\ee, \Phi} C^{3/4+\ee/2} \sum_{q \leq C^{1/4}}{ \frac{\varphi(q_0)}{\varphi(q_1)}
                       \int_{-C^{\delta-1}}^{C^{\delta-1}}{ |M_\Phi(A, y ; q_0, \beta)| |M_\Phi(B, y ; q_0, \beta)| \dd \beta} } \]
et une inégalité de Cauchy-Schwarz ainsi que les calculs faits précédemment montrent que celui-ci est
\[ \ll_{\ee, \Phi} C^{3/4+\ee} \sqrt{ \Psi(A, y) \Psi(B, y) } .\]
Les hypothèses $A \leq C$ et~$B \asymp_\Phi C$ que l'on a faites montrent que chacun de ces trois termes d'erreur est
\[ \ll_{\ee, \Phi} C^{3/4+\ee} \sqrt{ \Psi(A, y) \Psi(B, y) } \]
et on en déduit
\begin{equation}\label{estim_arcmaj_1}
\begin{split}
\int_{\mathfrak M}{E_\Phi(A, y ; \vartheta) }&{ E_\Phi(B, y ; \vartheta) E_\Phi(C, y ; -\vartheta) \dd \vartheta} \\
= &\ \sum_{q \leq C^{1/4}}{ \varphi(q) \int_{-C^{\delta-1}}^{C^{\delta-1}}{M_\Phi(A, y ; q, \beta) M_\Phi(B, y ; q, \beta) M_\Phi(C, y ; q, -\beta) \dd \beta}} \\
&\ + O_{\ee, \Phi} \left(C^{3/4+2\ee} \sqrt{\Psi(A, y) \Psi(B, y)}\right)
. \end{split}
\end{equation}

On applique le Lemme~\ref{estim_m} avec $R=C$, pour successivement $x=A$, $x=B$ et~$x=C$.
On~reporte les estimations ainsi obtenues pour~$M_\Phi(A, y ; q, \beta)$, $M_\Phi(B, y ; q, \beta)$ et~$M_\Phi(C, y ; q, \beta)$ dans~\eqref{estim_arcmaj_1}.
Une étude détaillée des termes d'erreur permet d'écrire, compte tenu de~$B \asymp_\Phi C$ et~$\alpha_A = \alpha_C + o(1)$,
\begin{equation}\label{estim_mmm}
\begin{split}
M_\Phi(A, y ; q, \beta) M_\Phi(B, y ; q, \beta) &\ M_\Phi(C, y ; q, -\beta) \\
= \frac{\mu(q_1)}{\varphi(q_1)^3 } \widetilde{M}_\Phi(A, &\ y ; q_0, \beta) \widetilde{M}_\Phi(B, y ; q_0, \beta) \widetilde{M}_\Phi(C, y ; q_0, \beta) \\
+ O_{\delta, \Phi}\Bigg( & \ \frac{1}{\varphi(q_1)^3} \frac{q_0^{-\alpha_A-\alpha_B-\alpha_C+3\delta} \Psi(A, y)\Psi(B, y)\Psi(C, y)}{u_A (1+|\beta| A)(1+|\beta| B)(1+|\beta| C)} \Bigg) \\
+ O_{\delta, \Phi}\Big( & \ \frac{1}{\varphi(q_1)^3} \Big\{ q_0^{3/2} A C^{-1/2 + 4\delta} + \Psi(A, y) q_0^{-1-\alpha_C + \delta} C^{3\delta} \\
&\ + \Psi(A, y) \Psi(C, y) q_0^{-1/2-2\alpha_C+2\delta}C^{-1/2+2\delta} \Big\} \Big)
.\end{split}
\end{equation}

On suppose $(C, y) \in \cD(4+\eta, c_0)$ pour un certain $\eta>0$ qui garantisse $\alpha_C - 3/4 \geq 3\delta$.
On~peut vérifier que le choix $\eta = 97\delta$ est valable dès que~$\delta \leq 1/24$ pour~$x$ et~$y$ assez grands.
On~reporte l'estimation~\eqref{estim_mmm} dans l'estimation~\eqref{estim_arcmaj_1}. On considère tout d'abord le deuxième terme d'erreur.
Après intégration et sommation, on obtient un terme
\[ \ll_{\delta, \Phi} A C^{-3/8 + \delta} + \Psi(A, y) C^{1/4 - \alpha_C/4 + 5 \delta} + \Psi(A, y) \Psi(C, y) C ^{-1/2 + 3\delta} .\]
On vérifie alors que ceci est $\ll_{\ee, \Phi} C^{3/4 + \ee} \sqrt{\Psi(A, y) \Psi(B, y)}$ quitte à prendre $\delta$ assez petit en fonction de~$\varepsilon$.

On considère ensuite le premier terme d'erreur de~\eqref{estim_mmm}.
Après intégration et sommation, on obtient un terme
\begin{align*}
\ll_{\delta, \Phi} &\int_{-C^{\delta-1}}^{C^{\delta-1}}{\frac{\dd \beta}{(1+|\beta| A)(1+|\beta| B)(1+|\beta| C)}}  \frac{\Psi(A, y)\Psi(B, y)\Psi(C, y)}{u_A} \\ 
  & \times \sum_{q_0 \in S(C^{1/4}, y)}{\frac{\varphi(q_0)}{q_0^{\alpha_A+\alpha_B+\alpha_C-3\delta}} \sum_{\substack{q_1 \leq C^{1/4}/q_0 \\ P^-(q_1)>y}}{
  \frac{1}{\varphi(q_1)^2} }}
. \end{align*}
La somme en~$q_1$ est trivialement bornée.
Étant donnée notre hypothèse $\alpha_C - 3/4 \geq 3\delta$, la somme en~$q_0$ est majorée par
\[ \sum_{q_0 \geq 1}{\varphi(q_0)q_0^{-\alpha_A-\alpha_B-\alpha_C+3\delta}} \ll 1 .\]
Enfin, l'intégrale se majore comme suit :
\[ \int_{-C^{\delta-1}}^{C^{\delta-1}}{\frac{\dd \beta}{(1+|\beta| A)(1+|\beta| B)(1+|\beta| C)}}
   \ll \frac{1}{C} \int_{0}^{C^{\delta}}{\frac{\dd \xi}{(1+ \frac{A}{C} \xi )(1+ \frac{B}{C}\xi)(1+ \xi)}}
   \ll_\Phi \frac{1}{C} \]
la dernière majoration étant valable en vertu de~$B/C \gg_\Phi 1$.
Le premier terme d'erreur de~\eqref{estim_mmm} devient donc lorsqu'on le reporte dans~\eqref{estim_arcmaj_1} un terme
\[ \ll_{\delta, \Phi} \frac{\Psi(A, y)\Psi(B, y)\Psi(C, y)}{u_A C} .\]

On reporte maintenant le terme principal de~\eqref{estim_mmm} dans l'estimation~\eqref{estim_arcmaj_1}. Une inversion de Fourier fournit
\begin{align*}
\int_{-C^{\delta-1}}^{C^{\delta-1}}{\cPhi(\beta A, \alpha_A)} & {\cPhi(\beta B, \alpha_B) \cPhi(-\beta C, \alpha_C) \dd \beta} \\
= &\ \frac{1}{C} \int_{-C^{\delta}}^{C^{\delta}}{\cPhi(\xi A/C, \alpha_A) \cPhi(\xi B/C, \alpha_B) \cPhi(-\xi, \alpha_C) \dd \xi} \\
= &\ \frac{1}{C} \int_{-\infty}^{\infty}{\cPhi(\xi A/C, \alpha_A) \cPhi(\xi B/C, \alpha_B) \cPhi(-\xi, \alpha_C) \dd \xi}  + O_\Phi(C^{-1-\delta})\\
= &\ \frac{1}{C} {\mathfrak S}_{0}(\Phi ; A, B, C, y) + O_\Phi(C^{-1-\delta})
\end{align*}
Quant à la somme en~$q$, on vérifie que l'hypothèse $\alpha_C - 3/4 \geq 3\delta$ implique
\[ \frac{1}{y^{\alpha_A+\alpha_B+\alpha_C-2} \log y} \ll \frac{1}{u} .\]
On a alors
\begin{align*}
& \sum_{q_0 \in S(C^{1/4}, y)}{\sum_{\substack{q_1 \leq C^{1/4}/q_0 \\ P^-(q_1)>y}}{
        \frac{\mu(q_1)}{\varphi(q_1)^2} \frac{\alpha_A \alpha_B \alpha_C}{q_0^{\alpha_A + \alpha_B + \alpha_C}}
        \prod_{p|q_0}{\left( 1-\frac{p^{\alpha_A}-1}{p-1} \right) \left( 1-\frac{p^{\alpha_B}-1}{p-1} \right) \left( 1-\frac{p^{\alpha_C}-1}{p-1} \right)} }} \\
= &\ \sum_{P^+(q_0) \leq y}{\sum_{\substack{q_1 \geq 1 \\ P^-(q_1)>y}}{\frac{\mu(q_1)}{\varphi(q_1)^2} \frac{\alpha_A \alpha_B \alpha_C}{q_0^{\alpha_A + \alpha_B + \alpha_C}}
  \prod_{p|q_0}{\left( 1-\frac{p^{\alpha_A}-1}{p-1} \right) \left( 1-\frac{p^{\alpha_B}-1}{p-1} \right) \left( 1-\frac{p^{\alpha_C}-1}{p-1} \right)} }} \\
  & + O_\delta\left(C^{1/4(2+\delta-\alpha_A-\alpha_B-\alpha_C)} \right)
\end{align*}
où l'hypothèse $\alpha_C - 3/4 \geq 3\delta$ implique que l'exposant dans le terme d'erreur est $\leq -1/16$.
La somme en~$q_1$ vérifie
\[ \sum_{\substack{q_1 \geq 1 \\ P^-(q_1)>y}}{\frac{\mu(q_1)}{\varphi(q_1)^2}} = 1 + O\left(\frac{1}{y \log y}\right) \]
et pour la somme en~$q_0$ on a
\begin{align*}
\sum_{P^+(q_0) \leq y}{
  q_0^{-\alpha_A - \alpha_B - \alpha_C} }&{ \prod_{p|q_0}{\left( 1-\frac{p^{\alpha_A}-1}{p-1} \right) \left( 1-\frac{p^{\alpha_B}-1}{p-1} \right) \left( 1-\frac{p^{\alpha_C}-1}{p-1} \right)}
} \\
& = \prod_{p\leq y}{ \left( 1 + \frac{p-1}{p(p^{\alpha_A + \alpha_B + \alpha_C - 1} - 1)}
        \left( \frac{p-p^{\alpha_A}}{p-1} \right) \left( \frac{p-p^{\alpha_B}}{p-1} \right) \left( \frac{p-p^{\alpha_C}}{p-1} \right)
        \right) } \\
& = {\mathfrak S}_{1}(A, B, C, y) + O\left(\frac{1}{y^{\alpha_A+\alpha_B+\alpha_C-2} \log y}\right)
\end{align*}
On utilise ${\mathfrak S}_{0}(\Phi ; A, B, C, y) \ll 1$ et~${\mathfrak S}_{1}(A, B, C, y) \ll 1$
et on choisit $\delta$ suffisamment petit, en fonction de~$\ee$, afin d'obtenir
\begin{align*}
& \int_{\mathfrak M}{E_\Phi(A, y ; \vartheta)  E_\Phi(B, y ; \vartheta) E_\Phi(C, y ; -\vartheta) \dd \vartheta} \\
= &\ {\mathfrak S}_{0} {\mathfrak S}_{1} \frac{\Psi(A, y)\Psi(B, y)\Psi(C, y)}{C} + O_{\ee, \Phi}\left(C^{3/4+\ee} \sqrt{\Psi(A, y) \Psi(B, y)} + \frac{\Psi(A, y)\Psi(B, y)\Psi(C, y)}{u_A C} \right)
\end{align*}
et donc
\begin{align*}
N(A, B, C, y ; \Phi)
= &\ {\mathfrak S}_{0} {\mathfrak S}_{1} \frac{\Psi(A, y)\Psi(B, y)\Psi(C, y)}{C} \\
& + O_{\ee, \Phi}\left(C^{3/4+\ee} \sqrt{\Psi(A, y) \Psi(B, y)} + \frac{\Psi(A, y)\Psi(B, y)\Psi(C, y)}{u_A C} \right)
\end{align*}
ce qui achève la démonstration du théorème.

\end{proof}

\section{Solutions primitives pondérées et solutions non pondérées}

\subsection{Étude des solutions primitives pondérées}

On montre dans cette partie la deuxième assertion du Théorème~\ref{th1}. Soit~$c_0>0$ la constante absolue donnée par le Théorème~\ref{thm_NABC}. Soit~$\ee>0$ et~$(x, y) \in \cD(8+\ee, c_0)$.
Une inversion de Möbius fournit
\[
N^*(x, y ; \Phi) = \sum_{P^+(d) \leq y}{\mu(d) N\left(\frac{x}{d}, y ; \Phi\right)}
.\]

On pose $D_0 = x^{\delta_0}$ avec
\[ \delta_0 = \frac{1+\ee-\alpha}{2\alpha-1} \]
de sorte que~$D_0^{1-2\alpha} \ll \Psi(x, y) x^{-1-\ee/2}$.
Dans le domaine $\cD(8+\ee, c_0)$, on a $\alpha - 2/3 \gg 1$ donc pour~$\ee$ suffisamment petit les inégalités
$0 < \delta_0 < 1$, $1 \ll \delta_0$ et~$ 1 \ll 1 - \delta_0$ sont valables. Lorsque~$d > D_0$, on utilise la majoration triviale
\[ N(x/d, y ; \Phi) \ll_\Phi \Psi(K x/d, y)^2 \ll_\Phi d^{-2\alpha} \Psi(x, y)^2 \]
où~$K$ est tel que le support de~$\Phi$ est inclus dans $]0, K]$. On a donc
\begin{align*}
\sum_{\substack{P^+(d) \leq y \\ d > D_0}}{\mu(d) N\left(\frac{x}{d}, y ; \Phi\right)}
& \ll_\Phi \Psi(x, y)^2 \sum_{D_0 < d \leq K x}{d^{-2\alpha}} \\
& \ll \Psi(x, y)^2 D_0^{1-2\alpha}  \ll \frac{\Psi(x, y)^3}{x^{1+\ee/2}}
\end{align*}
qui est bien inclus dans le terme d'erreur de~\eqref{eqv_pcp_prim}.
Lorsque~$d \leq D_0$, on a $\log(x/d) \gg \log x$ ce qui implique que~$x/d$ est dans le domaine $\cD(8+\ee, c_0)$ quitte à prendre $c_0$ suffisamment petit. 
On peut donc utiliser l'estimation~\eqref{eqv_pcp} et écrire
\begin{align*}
N^*(x, y ; \Phi) = &\ \sum_{\substack{P^+(d) \leq y \\ d \leq D_0}}
                 {\mu(d){\mathfrak S}_{0}(\Phi, \alpha_{x/d}) {\mathfrak S}_{1}(\alpha_{x/d})
                  \frac{\Psi(x/d, y)^3}{x/d}\left\{1 + O_{\ee, \Phi}\left(\frac{1}{u_{x/d}}\right)\right\}} \\
                 &\ + O_\Phi\left(\frac{\Psi(x, y)^3}{x^{1+\ee}} \right)
.\end{align*}
On note que~$u_{x/d} \asymp u$ pour~$d \leq D_0$, ainsi que~$\sum_{d \geq 1}{d^{1-3\alpha}} \ll 1$. On a donc
\[
N^*(x, y ; \Phi) = \sum_{\substack{P^+(d) \leq y \\ d \leq D_0}}
                 {\mu(d){\mathfrak S}_{0}(\Phi, \alpha_{x/d}) {\mathfrak S}_{1}(\alpha_{x/d})
                  \frac{\Psi(x/d, y)^3}{x/d} }
                 + O_{\ee, \Phi}\left(\frac{\Psi(x, y)^3}{u x}\right)
.\]

Soit~$D_1 = u^{1/(3\alpha-2)}$. L'inégalité~$\alpha - 2/3 \gg 1$ fournit
\begin{equation}\label{somme_reste_d}
\sum_{d > D_1}{d^{1-3\alpha}} \ll \frac{1}{u}
\end{equation}
et ainsi 
\[ 
N^*(x, y ; \Phi) = \sum_{\substack{P^+(d) \leq y \\ d \leq D_1}}
                  {\mu(d){\mathfrak S}_{0}(\Phi, \alpha_{x/d}) {\mathfrak S}_{1}(\alpha_{x/d})
                  \frac{\Psi(x/d, y)^3}{x/d} }
                 + O_{\ee, \Phi}\left(\frac{\Psi(x, y)^3}{u x}\right)
.\]
Il est montré dans~\cite{TeneHild86} (équation 6.6) que l'on a
\[ \alpha_{x/d} - \alpha \ll \frac{\log d}{u (\log y)^2} .\]
Dans le domaine $\cD(8+\ee, c_0)$ et lorsque~$d \leq D_1$
\[ \frac{\log d}{u (\log y)^2} \ll \frac{\log u}{u (\log y)^2} \ll \frac{1}{u} .\]
On vérifie d'une part par convergence dominée que~$(\partial{\mathfrak S}_{0} / \partial\alpha) (\Phi, \alpha) \ll 1$,
d'autre part en considérant la dérivée logarithmique que~${\mathfrak S}_{1}'(\alpha) \ll 1$.
On a donc
\[ {\mathfrak S}_{0}(\Phi, \alpha_{x/d}) = {\mathfrak S}_{0}(\Phi, \alpha) + O\left(\frac{\log d}{u (\log y)^2}\right) \]
\[ {\mathfrak S}_{1}(\alpha_{x/d}) = {\mathfrak S}_{1}(\alpha) + O\left(\frac{\log d}{u (\log y)^2}\right) .\]
Il vient
\[ 
N^*(x, y ; \Phi) = {\mathfrak S}_{0}(\Phi, \alpha) {\mathfrak S}_{1}(\alpha)
                   \sum_{\substack{P^+(d) \leq y \\ d \leq D_1}}{\mu(d) \frac{\Psi(x/d, y)^3}{x/d} }
                 + O_{\ee, \Phi}\left(\frac{\Psi(x, y)^3}{u x}\right)
.\]
On a ensuite $t = (\log d)/\log y \ll (\log u)/\log y \ll 1$. L'estimation du Lemme~\ref{estim_psi_local} fournit donc
\[ \Psi\left(\frac{x}{d}, y\right) = \left\{1 + O\left(\frac{1}{u}\right)\right\}\frac{\Psi(x, d)}{d^\alpha} \]
et ainsi
\begin{align*}
N^*(x, y ; \Phi) & = {\mathfrak S}_{0}(\Phi, \alpha) {\mathfrak S}_{1}(\alpha) \frac{\Psi(x, y)^3}{x}
                   \sum_{\substack{P^+(d) \leq y \\ d \leq D_1}}{\frac{\mu(d)}{d^{3\alpha-1}} }
                 + O_{\ee, \Phi}\left(\frac{\Psi(x, y)^3}{u x}\right) \\
& = \frac{{\mathfrak S}_{0}(\Phi, \alpha) {\mathfrak S}_{1}(\alpha)}{\zeta(3\alpha-1, y)} \frac{\Psi(x, y)^3}{x}
                 + O_{\ee, \Phi}\left(\frac{\Psi(x, y)^3}{u x}\right)
\end{align*}
en vertu encore une fois de~\eqref{somme_reste_d}.
Ceci montre l'estimation~\eqref{eqv_pcp_prim} et achève la démonstration du Théorème~\ref{th1}.

\subsection{Étude des solutions non pondérées}

On montre dans cette section l'estimation~\eqref{eqv_nonpond}. L'estimation~\eqref{eqv_nonpond_prim} s'ensuit par une méthode identique à celle de la section précédente, simplifiée
par le fait qu'on ne se préoccupe plus de la taille du terme d'erreur.
Lorsque~$(x, y)$ vérifie les hypothèses du théorème~\ref{BGthm}, on a $(\log u)/\log y \ll 1/u$ ainsi que~$\alpha = 1 + O((\log u) / \log y)$ ce qui implique
\[ {\mathfrak S}_{0}({\mathbf 1}_{]0,1]}, \alpha) {\mathfrak S}_{1}(\alpha) = \frac{1}{2} + O\left(\frac{\log u}{\log y}\right) \]
et l'estimation~\eqref{eqv_nonpond} découle donc du théorème~\ref{BGthm}.

On suppose donc que~$(x, y)$ ne vérifie pas les hypothèses du théorème~\ref{BGthm} pour par exemple $\ee=1/6$, en particulier, pour tout~$c_0$ fixé on a  $(\log y) H(u)^{-c_0} \leq 1$ pour $x$ et $y$ assez grands.
Il s'agit d'établir la borne supérieure de l'estimation~\eqref{eqv_nonpond}, puisque la borne inférieure est montrée dans~\cite{SoundLaga2011}.

Soit~$\ee>0$. On part de l'expression
\[
N(x, y) = \sum_{\substack{\kk \in \bfN^3 \\ k_1 \geq k_3 \\ k_2 \geq k_3}}{N(x2^{-k_1}, x2^{-k_2}, x2^{-k_3}, y ; {\mathbf 1}_{]1/2, 1]})}
 \]
où on a noté $\kk = (k_1, k_2, k_3)$.

Soit~$K_0$ tel que que~$2^{K_0} = x^{\delta_0}$, avec $\delta_0 = 1/\alpha - 1 + \ee$,
de sorte que~$2^{-\alpha K_0} = o(\Psi(x, y)x^{-1})$.
Alors par une majoration triviale et l'utilisation du Lemme~\ref{estim_psi_local} on a
\begin{align*}
\sum_{\substack{\kk \in \bfN^3 \\ \max(k_1, k_2) \geq K_0}}{N(x2^{-k_1}, x2^{-k_2}, x2^{-k_3}, y ; {\mathbf 1}_{]1/2, 1]})} 
 &\ \ll N(x 2^{-K_0}, x, x, y ; {\mathbf 1}_{]0, 1]}) \\
 &\ \ll 2^{-\alpha K_0} \Psi(x, y)^2 = o\left(\frac{\Psi(x, y)^3}{x}\right)
.\end{align*}
On a donc
\[
N(x, y) = \sum_{\substack{\kk \in \bfN^3 \\ K_0 \geq k_1 \geq k_3 \\ K_0 \geq k_2 \geq k_3}}{N(x2^{-k_1}, x2^{-k_2}, x2^{-k_3}, y ; {\mathbf 1}_{]1/2, 1]})}
 + o\left(\frac{\Psi(x, y)^3}{x}\right)
\]

On fixe $\delta>0$ et on choisit une fonction $\Phi$ de classe $\cC^\infty$ vérifiant
${\mathbf 1}_{]1/2, 1]} \leq \Phi \leq {\mathbf 1}_{](1-\delta)/2, 1+\delta]}$.
On a
\[
N(x, y) \leq \sum_{\substack{\kk \in \bfN^3 \\ K_0 \geq k_1 \geq k_3 \\ K_0 \geq k_2 \geq k_3}}{N(x2^{-k_1}, x2^{-k_2}, x2^{-k_3}, y ; \Phi)}
 + o\left(\frac{\Psi(x, y)^3}{x}\right)
\]
On note pour simplifier $A := x2^{-k_1}$, $B := x2^{-k_2}$, $C := x2^{-k_3}$.
On peut appliquer le Théorème~\ref{thm_NABC}. Soient~$c_0>0$ la constante absolue et~$\eta = \eta(\ee) >0$ le réel donnés par le théorème, et supposons $(x, y) \in \cD(4+\eta)$.
On a alors $\alpha - 1/2 \gg 1$ donc $(\log A)/(\log C) \gg 1$ pour tous les indices $k_1$, $k_3$ vérifiant $k_1 \leq K_0$ et~$k_3 \geq 0$.
On vérifie ensuite que quitte à diminuer la constante absolue $c_0$ dans la définition de~$\cD(4+\eta)$,
on a $(A, y), (B, y), (C, y) \in \cD(4+\eta)$ lorsque~$0 \leq k_1, k_2, k_3 \leq K_0$ : pour ces valeurs des indices,
l'estimation~\eqref{eqv_pcp_ABC} est donc valable.
En remarquant de plus que
\[ \sum_{\kk \in \bfN^3}{C^{3/4+\ee}\sqrt{\Psi(A, y) \Psi(B, y)}} \ll x^{3/4+\ee}\Psi(x, y) \]
on en déduit
\begin{align*}
N(x, y) \leq &\ \sum_{\substack{\kk \in \bfN^3 \\ K_0 \geq k_1 \geq k_3 \\ K_0 \geq k_2 \geq k_3}} {{\mathfrak S}_1(A, B, C, y)
 {\mathfrak S}_0(\Phi ; A, B, C, y) \frac{\Psi(A, y) \Psi(B, y) \Psi(C, y)}{C}} \\
 & + O_{\ee, \Phi}\left( x^{3/4+\ee}\Psi(x, y) \right)
   + o\left( \frac{\Psi(x, y)^3}{x} \right)
.\end{align*}

On choisit $\ee \leq 1/48$ et tel que~$4+\eta(\ee) \leq 8$.
On suppose $(x, y) \in \cD(8+384\ee)$, et on a alors de la même façon que dans la formule~\eqref{estim_erreur2_erreur1},
\[ x^{3/4+\ee}\Psi(x, y) = o\left( \frac{\Psi(x, y)^3}{x}\right) .\]

On pose $K_1 = \lfloor \log y / \log 2 \rfloor$ de sorte que~$2^{K_1} \asymp y$.
Pour les valeurs des indices sur lesquelles on somme, on a
\[ {\mathfrak S}_1 (A, B, C, y) {\mathfrak S}_0(\Phi ; A, B, C, y) \ll 1 .\]
La contribution des indices vérifiant $\max(k_1, k_2) \geq K_1$ est donc
\begin{align*}
& \ll \sum_{\substack{\kk \in \bfN^3 \\ k_2 \geq k_3 \\ k_1 \geq K_1}}{\frac{\Psi(x2^{-k_1}, y)\Psi(x2^{-k_2}, y)\Psi(x2^{-k_3}, y)}{x2^{-k_3}}} \\
& \ll \frac{\Psi(x, y)^3}{x} \sum_{\substack{\kk \in \bfN^3 \\ k_2 \geq k_3 \\ k_1 \geq K_1}}{2^{-\alpha_A k_1 - \alpha_B k_2 + (1-\alpha_C)k_3}} \\
& \ll \frac{\Psi(x, y)^3}{x} 2^{-\alpha K_1} \sum_{k_3 \geq 0}{2^{(1 - 2\alpha)k_3}} = o\left(\frac{\Psi(x, y)^3}{x}\right)
.\end{align*}
Ainsi
\begin{align*}
N(x, y) \leq &\ \sum_{\substack{\kk \in \bfN^3 \\ K_1 \geq k_1 \geq k_3 \\ K_1 \geq k_2 \geq k_3}}{{\mathfrak S}_1(A, B, C, y)
 {\mathfrak S}_0(\Phi ; A, B, C, y) \frac{\Psi(A, y) \Psi(B, y) \Psi(C, y)}{C}} \\
 & + o_{\ee, \Phi}\left(\frac{\Psi(x, y)^3}{x}\right)
.\end{align*}

En utilisant la définition~\eqref{def_sigma0_multi} on écrit
\begin{multline*} {\mathfrak S}_{0}(\Phi ; A, B, C, y) =  \alpha_A \alpha_B \alpha_C
 2^{\alpha_A k_1} 2^{\alpha_B k_2} 2^{(\alpha_C - 1)k_3} \times \\
 \int_{0}^{\infty} { \int_{0}^{\infty} { \Phi(v_1 2^{k_1}) \Phi(v_2 2^{k_2}) \Phi((v_1+v_2)2^{k_3})
                         v_1^{\alpha_A-1} v_2^{\alpha_B-1} (v_1 + v_2)^{\alpha_C - 1} \dd v_1} \dd v_2}
\end{multline*}
où l'on a effectué les changements de variables $t_1 \gets v_1 2^{k_1}$ et~$t_2 \gets v_2 2^{k_2}$.
Pour les valeurs que parcourent les indices $k_1$, $k_1$ et~$k_3$ on a
\[ \max( |\alpha_A - \alpha|, |\alpha_B - \alpha|, |\alpha_C - \alpha|) \ll 1/(\log x) .\]
En particulier on a
\[ {\mathfrak S}_1(A, B, C, y) \sim {\mathfrak S}_1(\alpha) .\]
Quant à la double intégrale, son intégrande est à support compact inclus dans $[0, 1+\delta]^2$, et pour~$v_1, v_2 \leq 1+\delta$ on a
\[ v_1^{\alpha_A-1} v_2^{\alpha_B-1} (v_1 + v_2)^{\alpha_C - 1} \sim \left(v_1 v_2 (v_1 + v_2)\right)^{\alpha - 1} .\]
On a donc
\begin{align*}
N(x, y) \leq &\ {\mathfrak S}_1(\alpha) \alpha³ \sum_{\substack{\kk \in \bfN^3 \\ K_1 \geq k_1 \geq k_3 \\ K_1 \geq k_2 \geq k_3}}
  {2^{\alpha_A k_1} 2^{\alpha_B k_2} 2^{(\alpha_C - 1)k_3} \frac{\Psi(A, y) \Psi(B, y) \Psi(C, y)}{C}} \\ &\ { \times
 \int_{0}^{\infty} { \int_{0}^{\infty} { \Phi(v_1 2^{k_1}) \Phi(v_2 2^{k_2}) \Phi((v_1+v_2)2^{k_3})
                         \left(v_1 v_2 (v_1 + v_2)\right)^{\alpha - 1} \dd v_1} \dd v_2}  }\\
 & + o_{\ee, \Phi}\left(\frac{\Psi(x, y)^3}{x}\right)
.\end{align*}
Le Lemme~\ref{estim_psi_local} avec l'inégalité $K_1 \ll \log y$ fournit uniformément lorsque~$0 \leq k_1, k_2, k_3 \leq K_1$ les équivalents
\[ \Psi(A, y) \sim 2^{-\alpha_A k_1} \Psi(x, y) \]
\[ \Psi(B, y) \sim 2^{-\alpha_B k_2} \Psi(x, y) \]
\[ \Psi(C, y) \sim 2^{-\alpha_C k_3} \Psi(x, y) \]
de sorte que
\begin{align*}
N(x, y) \leq &\ {\mathfrak S}_1(\alpha) \alpha³ \frac{\Psi(x, y)^3}{x} \\ &\ \times \sum_{\substack{\kk \in \bfN^3 \\ K_1 \geq k_1 \geq k_3 \\ K_1 \geq k_2 \geq k_3}}
 \int_{0}^{\infty} { \int_{0}^{\infty} { \Phi(v_1 2^{k_1}) \Phi(v_2 2^{k_2}) \Phi((v_1+v_2)2^{k_3})
                         \left(v_1 v_2 (v_1 + v_2)\right)^{\alpha - 1} \dd v_1} \dd v_2} \\
 & + o_{\ee, \Phi}\left(\frac{\Psi(x, y)^3}{x}\right)
.\end{align*}

Soit~$v \in {\mathbf R}$. De la majoration $\Phi \leq {\mathbf 1}_{](1-\delta)/2, 1+\delta]}$ on déduit, quitte à supposer $\delta < 1/2$,
\[ \sum_{k \geq 0}{\Phi(v 2^k)} \leq {\mathbf 1}_{]0, 1]}(v) + {\mathbf 1}_{I_{\delta}}(v) \]
où on a noté $I_{\delta} = \cup_{k \geq 0}{](1-\delta)2^{-k}, (1+\delta)2^{-k}]}$. Alors, en étendant la somme sur chaque indice à ${\mathbf N}$, on a
\begin{align*}
& \sum_{\kk \in \bfN^3}
 \int_{0}^{\infty} { \int_{0}^{\infty} { \Phi(v_1 2^{k_1}) \Phi(v_2 2^{k_2}) \Phi((v_1+v_2)2^{k_3})
                         \left(v_1 v_2 (v_1 + v_2)\right)^{\alpha - 1} \dd v_1} \dd v_2} \\
\leq &\ \int_{0}^{1} { \int_{0}^{1-t_2} { \left(t_1 t_2 (t_1 + t_2)\right)^{\alpha - 1} \dd t_1} \dd t_2} \\
& + O\left(\int_{I_{\delta}} { \int_{0}^{1} { \left(t_1 t_2 (t_1 + t_2)\right)^{\alpha - 1} \dd t_1} \dd t_2}
         + \int_{0}^{1} { \int_{0}^{1} { {\mathbf 1}_{I_{\delta}}(t_1+t_2) \left(t_1 t_2 (t_1 + t_2)\right)^{\alpha - 1} \dd t_1} \dd t_2} \right)
.\end{align*}
Par convergence dominée, le terme d'erreur tend vers $0$ avec $\delta$. On dispose donc d'une fonction $f(\delta)$ qui tend vers $0$
telle que
\[ N(x, y) \leq (1 + f(\delta)) {\mathfrak S}_{0}({\mathbf 1}_{]0,1]}, \alpha) {\mathfrak S}_1(\alpha) \frac{\Psi(x, y)^3}{x} 
 + o_{\ee, \Phi}\left(\frac{\Psi(x, y)^3}{x}\right) \]
ce qui implique bien la majoration
\[ N(x, y) \leq {\mathfrak S}_{0}({\mathbf 1}_{]0,1]}, \alpha) {\mathfrak S}_1(\alpha) \frac{\Psi(x, y)^3}{x}  + o_{\ee, \Phi}\left(\frac{\Psi(x, y)^3}{x}\right) \]
Cela montre la première assertion du Théorème~\ref{th2}, compte tenu de
\[ {\mathfrak S}_{0}({\mathbf 1}_{]0,1]}, \alpha) {\mathfrak S}_1(\alpha) \gg 1 .\]

\bibliographystyle{smfplain}
\bibliography{smooth_abc}
 
\end{document}